\renewcommand{\mathcal}{\mathscr}
\newtheorem{theorem}{Theorem}[section]
\newtheorem{lemma}[theorem]{Lemma}
\newtheorem{proposition}[theorem]{Proposition}
\newtheorem{definition}[theorem]{Definition}
\newtheorem{remark}[theorem]{Remark}
\renewcommand{\geq}{\geqslant}
\renewcommand{\ge}{\geqslant}
\renewcommand{\leq}{\leqslant}
\renewcommand{\le}{\leqslant}
\renewcommand{\phi}{\varphi}
\def\R{{\mathbb R} }
\def\N{{\mathbb N} }
\def\EN{{\mathcal E} }
\numberwithin{equation}{section}
\title[Monotonicity results]{Some monotonicity results
\\ for minimizers in the calculus of variations}
\author{Ovidiu Savin and Enrico Valdinoci}
\begin{document}

\begin{abstract}
We obtain monotonicity properties for minima and stable solutions of general energy 
functionals of the type 
$$ \int F(\nabla u, u, x) \, dx  $$
under the assumption that a certain integral grows at most 
quadratically at infinity. As a consequence we obtain several rigidity 
results of global solutions in low dimensions.
\end{abstract}

\maketitle

\section{Introduction}

In this paper we deal with monotonicity properties of minima
for quite 
general energy 
functionals of the type 
\begin{equation}\label{functional}
\int_\Omega F(\nabla u, u, x) \, dx  \end{equation}
where $\Omega$ is a domain of $\R^n$. 
These monotonicity properties are often used for the classification of global minimizers, and therefore play a key role in the regularity theory in the calculus of variations, see for example the case of minimal surfaces theory \cite{giusti}, free boundary problems \cite{AlC}, \cite{CJK}, \cite{DS} phase transitions \cite{AC} etc. Some applications of our results are given in Section \ref{s2}.  

We consider the case when the domain $\Omega$ and the functional $F$ are 
invariant under translations in a number of directions $e_k$,..,$e_n$, and we are interested in monotonicity properties of energy minimizers (or stable solutions) in the class of functions obtained by piecewise Lipschitz domain deformations in these $e_k,..,e_n$ directions. Our main result states that, under rather mild assumptions on $F$, if there exists a constant $C>0$ such that for all large $R$, a minimizer $u$ satisfies
$$\int_{\Omega \cap B_R} |D^2_pF \, (\nabla u,u,x)|\, |\nabla 
u|^2\,dx\le CR^2,$$
then $u$ is one-dimensional in each subspace generated by $e_k,..,e_n$ (see Theorem \ref{t3} for the precise statement).  

The general approach to obtain such rigidity results (see for instance the case of minimal surfaces) is to apply the stability inequality (see \eqref{stable eq}) to a suitable cutoff function. However, this approach becomes often difficult to implement. An example occurs when the functional $F$ becomes singular near $\partial \Omega$ as in the case of {\it $s$-nonlocal minimal surfaces}, $s \in (0,1)$ (see \cite{CRS}) when the energy functional has the form 
$$F(\nabla u,u,x)=|\nabla u|^2 x_1^{1-s}, \quad \Omega=\{x_1>0\}.$$
Then the stability inequality does not have a simple form due to the fact that integrations by parts are difficult to handle. 

We prove our results inspired by the simple method developed in \cite{CS} where we studied global nonlocal minimal surfaces in two dimensions. The main idea is to avoid the precise form of the stability inequality and just compare the energies of $u$ and a translation of itself. In this way we can deal with rather general situations of energy functionals and also consider minimizers directly in the natural class of functions obtained by domain deformations.

We describe briefly the strategy below. We compare the energies of $u$ 
and $$\max\{u(x), 
u(x+te_n)\},$$ 
and for this we need to modify this comparison function at infinity so that it 
becomes a 
compact perturbation of $u$. The growth condition in the integral above guarantees that the difference between the energy
of the perturbed function and the energy of~$u$ can be made arbitrarily 
small. 
On the other hand, if~$u$ is not monotone in the $e_n$ direction then we can modify locally 
the comparison function above and decrease its energy by a
small fixed amount, and this contradicts the minimality of~$u$.

The paper is organized as follows. In Section \ref{s2} we state our main theorems 
and in Section \ref{9ss} we provide some concrete applications of our results.
The main ingredients of the proofs are given in Section \ref{0055} where we perform the local analysis, and in Section \ref{s3}
where we estimate the energy of the perturbations at infinity.
The proofs will be completed in Sections \ref{33s}
and \ref{44s}. In Section \ref{55s} we discuss an explicit 1D example to illustrate better the notion of minimizer and stability in the class of piecewise Lipschitz deformations. 
Finally in Section \ref{details} we prove several remarks pointed out throughout the paper.

\section{Main results}\label{s2}
We consider energy functionals as in \eqref{functional} in the case when the domain $\Omega$ and the functional $F$ are 
invariant under translations in the $e_n$-direction, that is
$$\Omega=  \mathcal{U} \times \R, \quad \quad \mathcal {U} \subseteq 
\R^{n-1},$$
and $F$ does not depend on the $x_n$-coordinate.

Points in $\Omega$ are denoted by $x=(x',x_n)\in \mathcal{U} \times 
\R$. We assume that the functional $F$ is convex in 
with respect to the first variable. Precisely, we suppose that
\begin{equation}\label{H1}
F=F(p,z,x')\in C(\R^n\times \R \times \mathcal{U}),
\end{equation}
and for any $(z,x')\in\R\times {\mathcal{U}}$, $F$ is $C^2$ and 
uniformly convex in $p$ at all $p$ with $p_n \ne 0$.

Furthermore, we assume that~$F_{pp}=D^2_pF$ satisfies the natural growth 
condition
\begin{equation}\label{H2}
|F_{pp}(p+q,z,x')|\le C \,  |F_{pp} (p,z,x')|
\end{equation}
for some $C>0$, and any $p$, $q\in \R^n$ with $|q|\le |p_n|/2$.

For any $R>0$, we introduce the energy
functional~$\EN_R$ 
defined by
$$ \EN_R (u):=\int_{\Omega \cap B_R} F(\nabla u(x),u(x),x')\,dx.$$
We study monotonicity properties of suitable minimal or stable solutions for the energy $\EN$ 
among perturbations which are obtained by piecewise domain deformations in the 
$e_n$-direction. 
For this we introduce the following notation:

\begin{definition}\label{d1}
We say that $v$ is an $e_n$-Lipschitz deformation of $u$ in $B_R$
if there exists a Lipschitz function $\psi$ with compact support in 
$B_R$, and $\|\psi_n\|_{L^\infty(\R^n)} <1$ such that
$$v(x)=u(x+\psi(x)e_n)\qquad\forall x\in\Omega.$$
\end{definition}

In the notation of Definition~\ref{d1}, we have that if $u$ is (locally) 
Lipschitz then $v$ 
is (locally) Lipschitz as well.

\begin{definition}\label{d2}
Let $u\in C^{0,1}(\Omega)$. We say that $v\in C^{0,1}(\Omega)$ is a 
piecewise 
$e_n$-Lipschitz deformation of $u$ in $B_R$ and write $$v \in D_R(u)$$
if there exist a finite number $v^{(1)}$,..., $v^{(m)}$ of 
$e_n$-Lipschitz 
deformations of $u$ in $B_R$ such that $$v(x)=v^{(i)}(x) \quad \quad 
\mbox{for some $i$ (depending on $x\in\Omega$).}$$  
Also, if all $v^{(i)}$ satisfy
$$v^{(i)}(x)=u(x+\psi^{(i)}(x)e_n) \quad \mbox{with} \quad 
\|\psi^{(i)}\|_{C^{0,1}(\Omega)} \le \delta$$
for some $\delta>0$, we write
$$v \in D^\delta_R(u).$$
\end{definition}

We list some elementary properties that follow easily 
from Definition~\ref{d2}:
\begin{equation}\begin{split}\label{765ty}
&v, w \in  D^\delta_R(u) \quad\Rightarrow \quad\min \{v,w\}, \max 
\{v,w \} 
\in 
D^\delta_R(u);\\ 
&
v\in D^\delta_R(u), \quad w\in D^\delta_R(v)\quad\Rightarrow \quad w\in 
D^{3\delta}_R(u);\\
& v \in D^\delta_R(u)\quad\Rightarrow\quad 
\|v-u\|_{L^\infty(\Omega)} \le C \delta \|u\|_{C^{0,1}(\Omega)};\\
& v \in D^\delta_R(u), \quad
u \in C^{1,1}(\Omega) \quad\Rightarrow\quad
\|v-u\|_{C^{0,1}(\Omega)} \le C \delta \|u\|_{C^{1,1}(\Omega)}.
\end{split}\end{equation}

\begin{definition}\label{d3}
We say that~$u\in C^{0,1}(\Omega)$ is an $e_n$-minimizer for~$\EN$ if 
for any $R>0$ we have
that~$\EN_R(u)$ is finite and 
$$\EN_R(u) \le \EN_R(v), \quad \quad \forall v \in D_{R}(u).$$
\end{definition}

\begin{remark}\label{we} {\rm The standard definition in the calculus of
variation consists in saying that $u$ is              
a classical
minimizer for $\EN$ if it minimizes the energy with respect
to compact deformations of the graph of $u$ in the vertical direction ($e_{n+1}$-direction) that is:
$$\EN_R(u) \le \EN_R(u+\varphi)  $$
for any Lipschitz~$\varphi$ with compact support in $\Omega \cap B_R$.
We observe that
when $\Omega=\R^n$, $e_n$-minimality is a weaker 
condition than classical minimality.

For example any function which is constant in the $e_n$-direction is 
always an $e_n$-minimizer, but not necessarily  a classical minimizer. 
}\end{remark}

Our first general monotonicity result is the following:

\begin{theorem}\label{GENERAL}
Let~$u \in C^1(\Omega)$ be an $e_n$-minimizer for the energy $\EN$ with $F$
satisfying~\eqref{H1} and~\eqref{H2}. 

If there exists $C>0$ such that for all large $R$
\begin{equation}\label{EE}
\int_{\Omega \cap B_R} |F_{pp} \, (\nabla u,u,x')|\, |\nabla 
u|^2\,dx\le CR^2,
\end{equation}
then~$u$ is monotone on each line in the $e_n$-direction, i.e., for 
any~$\bar x\in\Omega$, either~$u_n(\bar x+te_n)\ge0$ or~$u_n(\bar x+te_n)\le 0$
for any~$t\in\R$.
\end{theorem}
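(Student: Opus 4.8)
The plan is to argue by contradiction via a translation-comparison scheme, following the heuristic sketched in the introduction. Suppose $u$ is not monotone on some line $\{\bar x + t e_n\}$. Fix $t>0$ and consider the translate $u^t(x) := u(x+te_n)$, which is again an $e_n$-minimizer (since $\Omega$ and $F$ are $x_n$-independent) with the same energy on any ball up to a boundary error. The candidate competitor is $w = \max\{u, u^t\}$, together with its partner $\min\{u,u^t\}$; the classical ``minimizer-inequality'' trick says that if $u$ is minimal then comparing $\EN_R(u)+\EN_R(u^t)$ with $\EN_R(\max)+\EN_R(\min)$ forces, at least morally, that $\max\{u,u^t\}$ is itself a minimizer, and sliding $t\to 0^+$ should propagate monotonicity. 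The obstruction is that $\max\{u,u^t\}$ is not a compact deformation of $u$: it differs from $u$ on the whole unbounded set $\{u^t > u\}$. This is exactly where hypothesis \eqref{EE} enters.

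The key steps, in order, would be:

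\textbf{Step 1 (Local gain).} Using the local analysis (Section \ref{0055}), show that if $u$ fails to be monotone along some $e_n$-line, then on a fixed small ball $B_\rho(x_0)\subset\subset\Omega$ the two graphs of $u$ and $u^t$ genuinely cross, and one can build a competitor in $D_R(u)$ — a localized $e_n$-Lipschitz deformation realizing $\max\{u,u^t\}$ near $x_0$ but equal to $u$ outside $B_\rho(x_0)$ — whose energy is strictly less than $\EN_R(u)$ by a fixed amount $\eta = \eta(\rho, t)>0$ independent of $R$. This uses the uniform convexity of $F$ in $p$ (at points with $p_n\ne 0$) to get a genuine strict decrease from the ``min–max rearrangement'' on the crossing region; the fact that one may realize $\max$ and $\min$ of $e_n$-Lipschitz deformations as elements of $D_R^\delta(u)$ is precisely \eqref{765ty}.

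\textbf{Step 2 (Global error control).} Build a global competitor by interpolating: take the competitor to equal $\max\{u,u^t\}$ (suitably realized as a piecewise $e_n$-Lipschitz deformation) on $B_{R}$, and cut it back to $u$ in the annulus $B_{2R}\setminus B_R$ using a cutoff $\psi$ with $|\nabla\psi| \lesssim 1/R$. The energy difference between this competitor and $u$, restricted to the annulus, must be estimated: by \eqref{H1}, $C^1$-regularity, and the natural growth condition \eqref{H2} one bounds $|F(\nabla u + q, u+s, x') - F(\nabla u, u, x')|$ along the deformation by $C\,|F_{pp}(\nabla u,u,x')|\,(|\nabla u|^2 + 1)$ times factors of $|\nabla\psi|\cdot\|u\|_{C^1}$ etc., so that the error is controlled by $\tfrac{C}{R^2}\int_{\Omega\cap(B_{2R}\setminus B_R)} |F_{pp}(\nabla u,u,x')|\,|\nabla u|^2\,dx$ plus lower-order terms. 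By \eqref{EE} this is $\le C\cdot R^{-2}\cdot R^2 = C$, and — here I would use a dyadic/diadic-averaging argument over scales $R_j = 2^j R_0$ — one extracts a sequence of radii along which the annular error tends to $0$. (This is the role of Section \ref{s3}.)

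\textbf{Step 3 (Contradiction).} Combining Steps 1 and 2: for $R$ in the good sequence, the global competitor lies in $D_R(u)$ and satisfies $\EN_R(\text{competitor}) \le \EN_R(u) - \eta + o(1) < \EN_R(u)$, contradicting $e_n$-minimality. Hence $u$ is monotone on every $e_n$-line. Finally one sends $t\downarrow 0$; since $u\in C^1$, the sign of $u(\cdot+te_n)-u(\cdot)$ along a line passes to $u_n$ in the limit, giving either $u_n \ge 0$ or $u_n\le 0$ on the whole line. One minor point to handle is that monotonicity ``for each fixed $t$'' must be upgraded to a consistent global sign; this follows because on a connected line a sign change of the increment for one $t$ can be localized and fed back into Step 1.

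\textbf{Main obstacle.} The delicate part is Step 2: making the comparison function a genuine \emph{piecewise $e_n$-Lipschitz domain deformation} (an element of $D_R(u)$) rather than a mere vertical perturbation, while simultaneously controlling its energy by \eqref{EE}. One must express $\max\{u,u^t\}$ and the cut-off interpolation as compositions $u(x+\psi^{(i)}(x)e_n)$ with $\|\psi^{(i)}\|_{L^\infty}<1$ and with the right gradient bounds, and then bound the energy increment using only \eqref{H1}, \eqref{H2} and no integration by parts (since, as the introduction stresses, the stability inequality is unavailable in the singular cases of interest). Getting the error estimate to come out as a constant — not growing in $R$ — is exactly what the quadratic-growth hypothesis \eqref{EE} is tailored for, and threading that estimate through the deformation structure is the technical heart of the argument.
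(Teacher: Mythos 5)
Your overall architecture --- a local strict energy gain at a transversal crossing of $u$ and $u(\cdot+te_n)$, plus a cutoff at infinity whose energy cost is controlled by \eqref{EE}, contradicting $e_n$-minimality --- is exactly the paper's, and your Step~1 is essentially Lemma~\ref{l1} together with Remark~\ref{F5}. The gaps are both in Step~2.

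First, your error estimate keeps only the second-order term of the Taylor expansion of $F$. When you deform $u$ by $x\mapsto x+t\psi(|x|)e_n$ in \emph{one} direction only, the expansion of the energy contains the first-order term $\int F_p\cdot(\nabla u\,A)\,dx$ with $|A|\sim t|\psi'|$; this is of size $t\,|\psi'|\,|F_p|\,|\nabla u|$ and is not controlled by \eqref{EE}, and integrating by parts to kill it is precisely what the method must avoid. The paper cancels it by writing the analogous inequality for the opposite translation $u^{+}_{R,t}$ (where $A$ becomes $-A$) and adding: the symmetrized quantity $\EN_R(u^+_{R,t})+\EN_R(u^-_{R,t})-2\EN_R(u)$ is purely quadratic in $A$ (see \eqref{AB}), and minimality discards the term $\EN_R(u^+_{R,t})-\EN_R(u)\ge0$. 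Second, even for the quadratic term, a linear cutoff on a single annulus $B_{2R}\setminus B_R$ with $|\nabla\psi|\lesssim 1/R$ only yields an error of order $R^{-2}\bigl(a(2R)-a(R)\bigr)\le C$, with $a$ as in \eqref{5.2a}; this is bounded but does \emph{not} tend to zero along any subsequence under \eqref{EE} alone. Indeed, $a(R)=R^2$ is consistent with \eqref{EE} and gives $R^{-2}(a(2R)-a(R))=3$ for every $R$, and dyadic averaging over $N$ scales returns the same constant, so your extraction of good radii fails. Since the local gain $c$ of Step~1 is a fixed constant for the fixed $t$ supplied by Remark~\ref{F5} (you cannot send $t\to0$ there), an $O(t^2)$ global error does not produce a contradiction. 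The paper's fix is the logarithmic cutoff \eqref{12.1}, which spreads the transition over the $\sim\log R$ dyadic annuli between $B_{\sqrt R}$ and $B_R$ with $|\psi_R'(s)|\sim (s\log R)^{-1}$, so that the quadratic error becomes
$$C\,\frac{t^2}{(\log R)^2}\int_{\Omega\cap(B_R\setminus B_{\sqrt R})}\frac{|F_{pp}(\nabla u,u,x')|\,|\nabla u|^2}{|x|^2}\,dx\le \frac{Ct^2}{\log R}\longrightarrow 0$$
by \eqref{5.2b}. With these two corrections, your Step~3 closes as in the paper; the final sign statement for $u_n$ follows directly from monotonicity of $s\mapsto u(\bar x+se_n)$, with no limit in $t$ required.
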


\begin{remark}\label{R1}
{\rm If a continuous function $u$
is monotone on each line in $\R^n$ then it 
is one-dimensional,
that is $u=f(x \cdot \xi)$ for some function 
$f:\R \to \R$ and some unit direction $\xi.$ 
See Section~\ref{details} for a
proof.}\end{remark}

Our second theorem is a version of 
Theorem~\ref{GENERAL} for stable critical 
points of the energy instead of $e_n$-minimizers. 
The stability condition we use involves the second variation of $\EN$ 
for deformations of $u$ in the $e_n$-direction as well as in the vertical 
$e_{n+1}$-direction. 
The precise definition is the following:

\begin{definition} \label{d4}
We say that $w$ is a piecewise Lipschitz 
deformation 
of $u$ in the $\{ e_n, e_{n+1}\}$-directions and write $$w\in \mathcal 
D_R^\delta(u)$$ if 
$$w=v+\varphi \quad \mbox {with} \quad v \in D^\delta_R(u)
\quad \mbox {and} \quad |\varphi|_{C^{0,1}(\Omega)} \le \delta$$
for some Lipschitz function $\varphi$ with compact support in $\Omega \cap B_R$. 
\end{definition}

We remark that here the vertical perturbations $\varphi$ have compact 
support in $\Omega \cap B_R$ whereas the $e_n$-deformations $\psi^{(i)}$
in Definitions~\ref{d1} and~\ref{d2} 
have compact support in $B_R$ (i.e., if~$x\in B_R$ with~$x'\in
\partial \mathcal{U}$ then~$\varphi(x)=0$ but~$\psi^{(i)}(x)$ may
be different from~$0$).

\begin{definition}\label{d5}
We say that $u$ is a $\{e_n, e_{n+1}\}$-stable solution for $\EN$ if for 
any $R>0$ and $\epsilon>0$
there exists $\delta>0$ depending on $R$, $\epsilon$ 
and $u$ such that for all $t\in(0,\delta)$ we have that~$\EN_R(u)$ is finite and
\begin{equation}\label{55}
\EN_R(w)-\EN_R(u) \ge -\epsilon t^2, \qquad \forall w \in 
\mathcal D^t_R(u).\end{equation}
\end{definition}

We point out that classical minimality (see Remark~\ref{we})
implies $\{e_n, e_{n+1}\}$-stablity
(on the other hand, $e_n$-minimality and
$\{e_n, e_{n+1}\}$-stablity do not imply each other in general).
Also,
since we allow perturbations in the $e_{n+1}$-direction in
Definition~\ref{d5}, 
then any $\{e_n, e_{n+1}\}$-stable solution is a critical point
of the energy functional. 

\begin{remark}\label{stable} {\rm In the calculus of variation, it is customary to consider
stable solutions of partial differential equations. Classically, a solution (i.e., a critical
point of the energy functional) is said to be stable if
\begin{equation}\label{stable eq}
\liminf_{t\to0} \frac{\EN_R (u+t \phi)-\EN_R(u) }{ t^2} \ge 0 \end{equation}
for any Lipschitz function~$\phi$ supported in~$B_R$.

If~$\Omega=\R^n$, $F\in C^2$ and~$u\in C^2$, this classical notion of stability
is equivalent to the notion
of~$\{e_n,e_{n+1}\}$-stable solution (for the proof of this, see Section~\ref{details}).
}\end{remark}

In the framework given by Definition~\ref{d5}
we prove the following result.

\begin{theorem}\label{GENERAL2}
Let~$u\in C^{0,1}(\Omega)$ be a $\{e_n,e_{n+1}\}$-stable solution and assume $F 
\in C^3(\R^2\times\R\times{\mathcal{U}})$ satisfies \eqref{H2}.

If the growth condition \eqref{EE} holds 
then~$u$ is monotone in the~$e_n$-direction,
i.e. either~$u_n\ge0$ or~$u_n\le0$ in $\Omega$.
\end{theorem}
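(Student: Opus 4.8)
The plan is to mimic the strategy behind Theorem~\ref{GENERAL}, replacing the use of $e_n$-minimality with the stability condition of Definition~\ref{d5}. As in the discussion in the introduction, the idea is to compare the energy of $u$ with that of a suitable modification of $\max\{u(x),u(x+te_n)\}$ which is a compact $\{e_n,e_{n+1}\}$-deformation of $u$. First I would set $\tau_t u(x):=u(x+te_n)$ and observe that, since $\Omega$ and $F$ do not depend on $x_n$, the function $\tau_t u$ is also a $\{e_n,e_{n+1}\}$-stable solution and $\EN_R(\tau_t u)$ differs from $\EN_R(u)$ only by boundary-layer terms of order $R^{n-1}t$. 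The key quantitative input is the growth condition~\eqref{EE}: arguing exactly as in Section~\ref{s3} (the estimate of the energy of the perturbations at infinity), one builds, for each small $t$ and each large $R$, a function $w_{t,R}\in\mathcal D^{Ct}_R(u)$ that agrees with $\max\{u,\tau_t u\}$ on a large ball and with $u$ outside $B_R$, such that
\begin{equation}\label{energygap}
\EN_R(w_{t,R})+\EN_R(\min\{u,\tau_t u\})-\EN_R(u)-\EN_R(\tau_t u)\le \omega(R)\,t^2,
\end{equation}
where $\omega(R)\to 0$ as $R\to\infty$; here one uses that the ``sliding'' deformation needed to glue $\max\{u,\tau_t u\}$ back to $u$ costs, by~\eqref{EE} and~\eqref{H2}, at most $C t^2 R^2/R^2=Ct^2$ times a factor that can be made small by choosing the cutoff to live on a dyadic annulus where the tail of the integral in~\eqref{EE} is small.

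Next I would run the standard ``min-max'' bookkeeping. Since $\min\{u,\tau_t u\}$ and $\max\{u,\tau_t u\}$ are again obtained from $u$ and $\tau_t u$ by piecewise $e_n$-Lipschitz deformations (using~\eqref{765ty}), and since $\tau_t u$ is stable with the same modulus, applying~\eqref{55} to both $w_{t,R}$ (a competitor against $u$) and to the corresponding min-type competitor against $\tau_t u$, and adding, gives
\begin{equation}\label{addstab}
\EN_R(w_{t,R})+\EN_R(\min\{u,\tau_t u\})-\EN_R(u)-\EN_R(\tau_t u)\ge -2\epsilon t^2
\end{equation}
for $t$ small depending on $R$ and $\epsilon$. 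Here I must be careful that $w_{t,R}$ genuinely lies in $\mathcal D^t_R(u)$ after possibly rescaling the deformation size, i.e.\ I should first fix $\epsilon$, then $R$ large so that $\omega(R)<\epsilon$, and only then invoke the $\delta=\delta(R,\epsilon,u)$ from Definition~\ref{d5}, shrinking $t$ further; the compact-support requirement on the vertical part $\varphi$ is automatic because the modification is supported in $B_R$ and, away from $\partial\mathcal U$, can be taken with $\varphi$ compactly supported in $\Omega\cap B_R$.

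Combining~\eqref{energygap} and~\eqref{addstab}, the only way both can hold for all small $t$ and all large $R$ is if the \emph{local} energy defect produced by taking $\max$ and $\min$ of two genuinely crossing translates is zero. Here I would import the local analysis of Section~\ref{0055}: the uniform convexity of $F$ in $p$ at points with $p_n\neq0$ (hypothesis~\eqref{H1}, which for Theorem~\ref{GENERAL2} is upgraded to $F\in C^3$) forces that, wherever the graphs of $u$ and $\tau_t u$ cross transversally, replacing them by their max and min strictly lowers the total energy by a fixed positive amount independent of $R$ — contradiction. Therefore no such crossing occurs: for every $t>0$ either $\tau_t u\ge u$ everywhere or $\tau_t u\le u$ everywhere, and a continuity/connectedness argument in $t$ (the sign is locally constant and the two sets of ``good'' $t$ are closed) yields a single sign, i.e.\ $u_n\ge0$ in $\Omega$ or $u_n\le 0$ in $\Omega$. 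The main obstacle I anticipate is purely bookkeeping-theoretic rather than conceptual: making sure the deformation $w_{t,R}$ stays within the \emph{prescribed} size class $\mathcal D^t_R(u)$ while simultaneously having energy defect $o(t^2)$, which is exactly where the quadratic growth~\eqref{EE} is used in the sharp form, and handling the case $p_n=0$ (where $F$ need not be convex) — but on the set $\{u_n=0\}$ the two translates do not cross transversally, so it does not enter the strict-decrease argument.
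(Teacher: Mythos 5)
Your global scaffolding --- the glued translations at infinity controlled by \eqref{EE} as in Section~\ref{s3}, the identity \eqref{3.1} for max and min, and the two applications of stability with the quantifiers ordered as $\epsilon$, then $R$, then $t<\delta(R,\epsilon)$ --- coincides with the paper's argument and is correct up to the point where a local energy decrease must be produced. The gap is in that local step. First, a small but real error: taking max and min does \emph{not} lower the total energy at a crossing; by \eqref{3.1} the sum of the two energies is exactly preserved, so the decrease must come from a further local perturbation of $\max\{u,u(\cdot+te_n)\}$. More seriously, the \emph{size} of that decrease matters. Because $\delta$ in Definition~\ref{d5} depends on $R$, once $R$ is fixed you are forced to send $t\to0$, so a gain that is merely ``a fixed positive amount'' for each fixed $t$ (which is what the Lemma~\ref{l1} corner-cutting used for Theorem~\ref{GENERAL} provides) is useless here: you need a gain bounded below by $ct^2$ with $c$ independent of $t$, in order to beat the $3\epsilon t^2$ loss. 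This is exactly the content of Lemma~\ref{l2}, which your proposal does not reconstruct: one expands the energy to second order around the critical point $u$, replaces $(u(\cdot+te_n)-u)/t$ by $u_n$, and reduces matters to showing that $u_n^+$ fails to minimize the quadratic form $Q$ --- which holds by Remark~\ref{F6} precisely because $u_n$ and $0$ cross transversally at a point where $u_n=0$ and $\nabla u_n\neq0$. Finding such a nondegenerate zero is Lemma~\ref{L3} (Hopf's lemma for the linearized equation), and this is where $F\in C^3$ and the vertical perturbations admitted in Definition~\ref{d5} (which guarantee that $u$ is a critical point and which supply the competitor $w+t\varphi$) are actually used.

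Relatedly, your closing remark that the set $\{u_n=0\}$ ``does not enter the strict-decrease argument'' is backwards: if $u(x)=u(x+te_n)$ then $u_n$ vanishes somewhere on the segment joining the two points, so the crossing set lies within distance $t$ of $\{u_n=0\}$, and the paper's local analysis is carried out exactly at a nondegenerate point of $\{u_n=0\}$ --- which is why Theorem~\ref{GENERAL2} assumes $F\in C^3$ everywhere rather than only at $p_n\neq0$ as in \eqref{H1}. Without Lemma~\ref{l2} and Lemma~\ref{L3} (or an equivalent second-variation argument producing a $ct^2$ gain that survives as $t\to0$), the proposal does not close.
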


We observe that the hypotheses in the two theorems above are slightly different and
the thesis of
Theorem~\ref{GENERAL}
is weaker than the one of 
Theorem~\ref{GENERAL2}
since in Theorem~\ref{GENERAL} we do not say that $u_n(x)$ has 
the same sign
for all $x$, but only
that, fixed $x$, $u_n(x+te_n)$ has the same sign for any $t$.

Our last theorem deals with $\{e_k,..., e_n\}$-stable solutions, that is, in the definition of stability we allow small piecewise Lipschitz deformations in the $e_k, .., e_n$-directions rather than only the $e_n$-direction or $\{e_n,e_{n+1}\}$-direction (see Definition \ref{d5.5} for a precise statement).  

\begin{theorem}\label{t3}
Assume that
$$\Omega=  \mathcal{U} \times \R^{n-k+1}, \quad \quad \mathcal {U} 
\subseteq
\R^{k-1},$$
$F$ does not depend on the $x_k$, ..., $x_n$ coordinates, ~$F$ satisfies \eqref{H2}
and that~$F \in C^3$ at all $p$ with $(p_k,...,p_n) \ne(0,..,0)$. 

If $u \in C^1(\Omega)$ is $\{e_k,..., e_n\}$-stable and the growth condition~\eqref{EE} holds, then $u$ is one-dimensional in any subspace 
generated by $\{e_k,...,e_n\}$.
\end{theorem}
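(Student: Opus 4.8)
The plan is to derive Theorem~\ref{t3} from Theorem~\ref{GENERAL2} together with Remark~\ref{R1}, by reducing the multi-direction stability to single-direction stability via an orthogonal change of variables in the translation-invariant block of coordinates.

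\emph{Step 1: monotonicity along every direction of the span.} Fix a unit vector $\xi\in\mathrm{span}\{e_k,\dots,e_n\}$ and pick an orthogonal map $T$ of $\R^n$ which acts as the identity on $\mathrm{span}\{e_1,\dots,e_{k-1}\}$, maps $\mathrm{span}\{e_k,\dots,e_n\}$ onto itself, and satisfies $T^{-1}e_n=\xi$. Then $T$ leaves $\Omega=\mathcal{U}\times\R^{n-k+1}$ and every ball $B_R$ invariant. Set $\tilde u:=u\circ T^{-1}\in C^1(\Omega)$ and $\tilde F(p,z,x'):=F(T^{-1}p,z,x')$, and let $\tilde\EN$ be the corresponding functional. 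A change of variables gives $\tilde\EN_R(g)=\EN_R(g\circ T)$ for every $g$, so that $\tilde u$ is a $\{e_k,\dots,e_n\}$-stable solution for $\tilde\EN$ (competitors of $\tilde u$ correspond, upon composition with $T$, to competitors of $u$, and the class of piecewise $e_k,\dots,e_n$-Lipschitz deformations together with the vertical ones transforms equivariantly under $T$). Moreover $\tilde F$ still does not depend on $x_k,\dots,x_n$, satisfies~\eqref{H2} (in the form relevant to this setting, with the norm of $(p_k,\dots,p_n)$ in place of $|p_n|$, since $T^{-1}$ acts orthogonally on the last $n-k+1$ coordinates), is $C^3$ wherever $(p_k,\dots,p_n)\ne(0,\dots,0)$ and in particular wherever $p_n\ne0$, and the growth condition~\eqref{EE} for $\tilde u$ follows from that for $u$ because $|T^{-1}p|=|p|$, orthogonal conjugation preserves $|F_{pp}|$, and $B_R$ is $T$-invariant. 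Regarding $\mathcal{U}\times\R^{n-k}\subseteq\R^{n-1}$ as the base, $\Omega=(\mathcal{U}\times\R^{n-k})\times\R$ has the form required in Section~\ref{s2}, and restricting the admissible deformations to those acting only in the direction $e_n$ shows that $\tilde u$ is $\{e_n,e_{n+1}\}$-stable for $\tilde F$. Theorem~\ref{GENERAL2} then applies to $\tilde u$ and gives that $\partial_n\tilde u$ has a constant sign in $\Omega$; undoing the change of variables, $\partial_\xi u$ has a constant sign in $\Omega$.

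\emph{Step 2: one-dimensionality on each slice.} By Step~1, for every $v\in\mathrm{span}\{e_k,\dots,e_n\}$ and every $x_0\in\Omega$ the function $s\mapsto u(x_0+sv)$ is monotone, since its derivative equals $|v|\,\partial_{v/|v|}u$, which has constant sign. Now fix any affine subspace $\pi$ parallel to $\mathrm{span}\{e_k,\dots,e_n\}$; since $\Omega=\mathcal{U}\times\R^{n-k+1}$, we have $\pi=\{x_0'\}\times\R^{n-k+1}\subseteq\Omega$ for some $x_0'\in\mathcal{U}$. Then $u|_\pi$ is a continuous function on $\pi\cong\R^{n-k+1}$ that is monotone on every line of $\pi$, so by Remark~\ref{R1} it is one-dimensional. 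This is exactly the assertion of the theorem.

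The point requiring care is the reduction in Step~1: checking that $\{e_k,\dots,e_n\}$-stability is genuinely preserved under the orthogonal change of variables $T$ (equivalently, that the deformation class $\mathcal D_R^\delta$ transforms equivariantly under $T$), and that the transformed integrand $\tilde F$ really inherits~\eqref{H2} and the $C^3$/uniform-convexity structure on the relevant open set. Beyond that, Theorem~\ref{t3} carries no new analytic content: the local energy decrease produced by the $\max$-type competitor and the quadratic control of the perturbation at infinity provided by~\eqref{EE} are already established in the proof of Theorem~\ref{GENERAL2}, which one could alternatively re-run verbatim with the direction $e_n$ replaced by any direction $\xi\in\mathrm{span}\{e_k,\dots,e_n\}$, along which $\Omega$ and $F$ are still translation invariant.
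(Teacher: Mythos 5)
There is a genuine gap in Step~1, and it is exactly the point the paper is careful about. You claim that, after the rotation $T$, ``restricting the admissible deformations to those acting only in the direction $e_n$ shows that $\tilde u$ is $\{e_n,e_{n+1}\}$-stable,'' and you then invoke Theorem~\ref{GENERAL2}. But Definition~\ref{d5} is \emph{not} a restriction of Definition~\ref{d5.5}: an $\{e_n,e_{n+1}\}$-stable solution must be stable against the competitors $w=v+\varphi$ with a \emph{vertical} compactly supported perturbation $\varphi$, and Definition~\ref{d5.5} contains no vertical perturbations whatsoever. The paper flags this explicitly right after Definition~\ref{d5.5}, and Section~\ref{55s} shows the implication you need is false: the function in \eqref{EXA} is $e_1$-stable in the sense of Definition~\ref{d5.5} but is not even a critical point of $\EN$, whereas every $\{e_n,e_{n+1}\}$-stable solution is a critical point. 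So $\{e_k,\dots,e_n\}$-stability of $u$ does not give $\{e_n,e_{n+1}\}$-stability of $\tilde u$, and Theorem~\ref{GENERAL2} cannot be applied as you propose; ``re-running the proof of Theorem~\ref{GENERAL2} verbatim'' in the direction $\xi$ fails for the same reason, since that proof uses the vertical perturbation $\varphi$ produced by Lemma~\ref{l2}.

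The paper's proof circumvents this by only invoking the local lemmas at points where the gradient of $\tilde u$ (in the $\{e_k,\dots,e_n\}$ variables) is nonzero, normalized to point in the $e_k$-direction; there Remark~\ref{r2} (and Remark~\ref{97}) converts the vertical perturbation of Lemma~\ref{l2} into a Lipschitz \emph{domain} deformation in the $\nabla u$-direction, which lies in $\mathrm{span}\{e_k,\dots,e_n\}$ precisely because $k<n$, and hence is an admissible competitor for Definition~\ref{d5.5}. One then concludes not by Theorem~\ref{GENERAL2} plus Remark~\ref{R1} but by an open-and-closed argument on the level set $\{\tilde u=\tilde u(0),\ \nabla\tilde u=\nabla\tilde u(0)\}$, showing the gradient points in a single direction wherever it is nonzero. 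This is also where the weaker regularity hypothesis of Theorem~\ref{t3} ($F\in C^3$ only where $(p_k,\dots,p_n)\ne(0,\dots,0)$) is used, whereas Theorem~\ref{GENERAL2} assumes $F\in C^3$ globally — a second, smaller obstruction to your reduction that you mention but do not resolve. Your Step~2 (slicewise application of Remark~\ref{R1}) would be fine if Step~1 were repaired, but as written the proof does not go through.
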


The theorem concludes that for each $(x_1,...,x_{k-1})\in\mathcal{U}$, $u(x)$ is one-dimensional (see Remark \ref{R1}) in the remaining variables $(x_k,\ldots,x_n)$. Of course, when $k=n$ the statement becomes trivial.

We point out that the hypothesis above on $\{e_k,..., e_n\}$-stability for $u$ is in general easily satisfied by critical points of $\EN$ which are monotone in the $e_n$ direction, and in fact such critical points are $\{e_k,..., e_n\}$-minimizers.

We conclude this section with several remarks on the theorems above.

\begin{remark}{\rm The results provided in this paper are in fact even more general:
we did not attempt to give the most general conditions
possible but rather to emphasize the method of proof
(further generalizations will be outlined in subsequent remarks
and some of these generalizations turn out to be important in the concrete applications).
For instance, we observe that the functional in~\eqref{functional}
may be generalized to
\begin{equation}\label{functional2}
\int_\Omega F(\nabla u, u, x') \, dx+\int_{\partial \Omega} G(u,x')\,d\mathcal{H}^{n-1}, \end{equation}    
where~$G$ satisfies the same regularity assumptions as~$F$.
The proofs in this case are affected only by minor, obvious modifications.
}\end{remark}

\begin{remark}\label{log}{\rm Condition \eqref{EE}
may be weakened by allowing logaritmic corrections too.
For instance, the right hand side of \eqref{EE} may be replaced by
$$ CR^2 \log R$$
or by
$$ CR^2 (\log R)(\log\log R).$$
More generally, one can define $\ell_0(R):=R$
and recursively $$
\ell_k(R):= \log (\ell_{k-1}(R))=
\underbrace{\log\circ\dots\circ\log}_{\text{$k$ times}}R
$$ for any $k\in\N$,
$k\ge 1$. Let also
\begin{equation}\label{PI}
\pi_k(R):=\prod_{j=0}^k \ell_j(R).\end{equation}
Then, instead of \eqref{EE}, one may take the weaker condition
\begin{equation}\label{EE weak}
\int_{\Omega\cap B_R}|F_{pp}(\nabla u,u,x')|\,|\nabla u|^2\,dx
\le CR \,\pi_k(R),
\end{equation}
for a given $k\in\N$.
For the proof of this fact, see Section \ref{details}
(notice that \eqref{EE weak} boils down to \eqref{EE}
if $k=0$). An energy growth with a logaritmic correction of the
type~$CR^2\log R$
was also considered in \cite{mos} in the case of semilinear equations.
}\end{remark}

\begin{remark} \label{ABS} {\rm
At first glance, Definition \ref{d2} 
may look unnecessarily complicated, since one may think that
Definition \ref{d1} suffices for Theorem \ref{GENERAL}. That is,
one may think that if $u$ minimizes the energy
with respect to any $e_n$-Lipschitz deformation and \eqref{EE}
is satisfied, then $u$ must possess some kind of monotonicity.
However this is not the case, as we show by an example in Section \ref{details}.
}\end{remark}

\section{Applications}\label{9ss}

Below we present some direct applications of our results and obtain several rigidity results of global solutions in low dimensions. We remark however that our theorems do not give in general the optimal dimension for these rigidity results. 
\subsection{De Giorgi's conjecture}
As a first application,
we obtain a classical one-dimensional symmetry property
related to a conjecture of De Giorgi (see~\cite{DG}):

\begin{theorem}[\cite{GG, BCN, AC, AAC}]
\label{DG}
Let~$f\in C(\R)$ and~$u\in C^2(\R^n)\cap L^\infty(\R^n)$ be a solution 
of~$-\Delta u+f(u)=0$ in the whole of~$\R^n$.

Suppose that:
\begin{itemize}
\item either $n=2$ and $u$ is stable (according to the notation recalled 
in Remark~\ref{stable}),
\item or $n=3$ and~$u_3>0$.
\end{itemize}
Then~$u$ is one-dimensional.
\end{theorem}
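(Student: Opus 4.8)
The plan is to recast the equation variationally and then apply Theorems~\ref{GENERAL2} and~\ref{t3}. Setting $W(z):=\int_0^z f(\tau)\,d\tau$ and $F(p,z,x'):=\frac12|p|^2+W(z)$, the bounded $C^2$ solutions of $-\Delta u+f(u)=0$ in $\R^n$ are exactly the bounded $C^2$ critical points of the energy $\EN$ built from this $F$. Since $F_{pp}\equiv\mathrm{Id}$, the hypotheses~\eqref{H1} and~\eqref{H2} hold trivially, and $F$ inherits the regularity of $W$ (we assume $f\in C^2$, as in the cited references). Boundedness of $u$ makes $f(u)$ bounded, so interior Schauder estimates together with the translation invariance of the equation give $\|\nabla u\|_{L^\infty(\R^n)}=:L<\infty$; in particular $u\in C^{0,1}(\R^n)$. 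Finally $|F_{pp}(\nabla u,u,x')|$ is a fixed constant, so the growth condition~\eqref{EE} reduces to $\int_{B_R}|\nabla u|^2\,dx\le CR^2$.

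Suppose first $n=2$ and $u$ stable. By Remark~\ref{stable} this is the same as $\{e_2,e_3\}$-stability; and since $n=2$ we have $\int_{B_R}|\nabla u|^2\le L^2|B_R|=CR^2$, so~\eqref{EE} is automatic. Theorem~\ref{GENERAL2} then gives that $u$ is monotone in the $e_2$-direction. Because the equation, the $L^\infty$ bound and the stability condition are all invariant under rotations of $\R^2$, applying this to the rotated solutions $u\circ\rho$ (again bounded stable solutions of the same equation) shows, as $\rho$ ranges over all rotations, that $t\mapsto u(x+t\xi)$ is monotone for every $x$ and every unit vector $\xi\in\R^2$. Hence $u$ is monotone on every line of $\R^2$, and by Remark~\ref{R1} it is one-dimensional.

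Suppose now $n=3$ and $u_3>0$. Since $u$ is a critical point of $\EN$ monotone in the $e_3$-direction, the remark following Theorem~\ref{t3} gives that $u$ is $\{e_1,e_2,e_3\}$-stable (indeed an $\{e_1,e_2,e_3\}$-minimizer). To verify~\eqref{EE}, i.e.\ $\int_{B_R}|\nabla u|^2\le CR^2=CR^{n-1}$, I would use the classical linear energy estimate for monotone bounded solutions (\cite{AC}, which rests on the fact from~\cite{AAC} that $u_3>0$ forces $u$ to be a local minimizer of $\EN$): after replacing $W$ by $W-\min_{[\inf u,\,\sup u]}W\ge0$ (which leaves the equation unchanged), comparison of $u$ in $B_R$ with the competitor that equals $u$ outside $B_R$, equals a minimum point $z_0$ of $W$ on $B_{R-1}$, and is radially interpolated in the annulus gives $\EN_{B_R}(u)\le CR^{n-1}$, whence $\int_{B_R}|\nabla u|^2\le2\EN_{B_R}(u)\le CR^{n-1}$. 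With~\eqref{EE} at hand, Theorem~\ref{t3} with $k=1$ — so that $\mathcal U$ is a point, $\Omega=\R^3$, and $F$ does not depend on $x_1,x_2,x_3$ — shows that $u$ is one-dimensional in the subspace generated by $\{e_1,e_2,e_3\}=\R^3$, i.e.\ $u$ is one-dimensional.

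The variational reformulation, the gradient bound, and the rotation argument in dimension two are routine. The single genuinely imported ingredient, and the step I expect to be the main obstacle toward a self-contained proof, is the linear energy growth $\int_{B_R}|\nabla u|^2\le CR^{n-1}$ in dimension three: this is exactly where the known proofs of De Giorgi's conjecture use that the monotonicity $u_3>0$ makes $u$ a minimizer, after which the quadratic energy bound follows by comparison. Granting this estimate, both cases reduce to a direct application of the results of Section~\ref{s2}.
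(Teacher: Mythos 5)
Your proposal is correct and follows essentially the same route as the paper: the same variational reformulation $F(p,z)=\frac12|p|^2+\tilde F(z)$, the observation that \eqref{H1}--\eqref{H2} are trivial, the automatic quadratic bound \eqref{EE} when $n=2$, and the energy estimate of~\cite{AAC} when $n=3$. The only differences are that you spell out steps the paper leaves implicit: for $n=2$ you make explicit the rotation argument plus Remark~\ref{R1} needed to pass from monotonicity in one direction to one-dimensionality, and for $n=3$ you route through Theorem~\ref{t3} with $k=1$ (using that a critical point monotone in $e_3$ is $\{e_1,e_2,e_3\}$-stable, as noted after Theorem~\ref{t3}) rather than the paper's bare invocation of Theorem~\ref{GENERAL2}; this is arguably the cleaner way to reach the full one-dimensionality claimed in dimension three. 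Your sketch of the linear energy growth $\int_{B_R}|\nabla u|^2\le CR^{2}$ via local minimality and comparison is exactly the content of Theorem~5.2 in~\cite{AAC}, which the paper simply cites, so nothing essential is missing.
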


\begin{proof}
We let~$\tilde F$ be a primitive of~$f$ and we define
$$ F(p,z,x):=\frac12|p|^2+\tilde F(z).$$
Then,~$F$ is clearly convex in~$p$ and it satisfies~\eqref{H2}.
It also satisfies~\eqref{EE}: when~$n=2$ this simply follows from the fact
that~$|B_R|\le CR^2$, and when~$n=3$ it is a consequence of
Theorem~5.2 in~\cite{AAC}. 

Now we apply
Theorem~\ref{GENERAL2} and obtain that~$u$ is one-dimensional.
\end{proof}

 We stress that the proof of
Theorem~\ref{DG} that we give here is based
on domain perturbations and it does not use some of
the basic ingredients exploited in the existing literature:
e.g., differently from \cite{BCN, AC, AAC}, it does not use
any Liouville type result, differently from \cite{GG}
it does not use the Ekeland's variational principle,
differently from \cite{Far} it makes no use of any complex
structure, differently from \cite{S} no costruction of barriers
is needed, and differently from \cite{FarH, FSV}
no geometric Poincar\'e inequality is exploited.

\begin{remark}\label{6fsv}{\rm The one-dimensional
results related to the Conjecture of De Giorgi
in dimensions~$2$ and~$3$ may be extended to a very broad class
of operators and nonlinearities: see Theorems~1.1 and~1.2
in~\cite{FSV}. We remark that our Theorem~\ref{GENERAL2}
also implies Theorems~1.1 and~1.2
in~\cite{FSV} (at least in case of smooth nonlinearities;
for a proof of this fact see Section~\ref{details}).
}\end{remark}

\subsection{Fractional De Giorgi conjecture}
The one-dimensional symmetry of Theorem~\ref{DG} has a 
counterpart in the fractional Laplace framework, that may be also obtained
as a consequence of the results of this paper:

\begin{theorem}[\cite{Sola, CSire, CCinti, CCinti2}]
\label{DGs}
Let~$s\in(0,1)$, $f\in C(\R)$ and~$u\in C^2(\R^n)\cap L^\infty(\R^n)$ be 
a solution
of~$(-\Delta)^s u+f(u)=0$ in the whole of~$\R^n$.

Suppose that:
\begin{itemize}
\item either $n=2$ and $u$ is stable (according to the notation recalled
in Remark~\ref{stable}),
\item or $n=3$, $s\in[1/2,1)$ and~$u_3>0$.
\end{itemize}
Then~$u$ is one-dimensional.
\end{theorem}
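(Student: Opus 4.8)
The plan is to reduce the fractional problem to the local framework already handled by Theorem~\ref{GENERAL2} via the Caffarelli--Silvestre extension. First I would recall that for $s\in(0,1)$, a bounded solution $u$ of $(-\Delta)^s u+f(u)=0$ in $\R^n$ is the trace on $\{x_{n+1}=0\}$ of the function $U(x,x_{n+1})$ on the halfspace $\R^{n+1}_+=\{x_{n+1}>0\}$ solving
\begin{equation*}
\operatorname{div}\big(x_{n+1}^{\,1-2s}\nabla U\big)=0 \quad\text{in }\R^{n+1}_+,\qquad U(\cdot,0)=u,
\end{equation*}
and that $U$ is, up to a normalizing constant, the critical point (in fact a local minimizer when $u$ is stable, by the results of Cabr\'e--Sol\`a-Morales and Sire--Valdinoci) of the weighted energy
\begin{equation*}
\int_{\R^{n+1}_+\cap B_R} x_{n+1}^{\,1-2s}\,\frac{|\nabla U|^2}{2}\,dx + \int_{\{x_{n+1}=0\}\cap B_R} \tilde F(U)\,d\mathcal{H}^n,
\end{equation*}
where $\tilde F'=f$. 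This is exactly an energy of the type \eqref{functional2} on the domain $\Omega=\R^n\times(0,\infty)=\mathcal{U}\times\R^{n-1}$ (after relabeling so that the translation-invariant directions are the $n-1$ horizontal coordinates orthogonal to the profile direction), with $F(p,z,x')=\tfrac12 |p|^2 x_{n+1}^{1-2s}$ and a boundary term $G(z,x')=\tilde F(z)$. The weight $x_{n+1}^{1-2s}$ is a fixed function of the last coordinate and does not affect \eqref{H2}, which holds here just as in the classical Laplacian case since $F_{pp}$ is a multiple of the identity.

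Next I would verify the two structural hypotheses. The convexity/regularity condition: $F$ is smooth and uniformly convex in $p$ away from $x_{n+1}=0$, which is precisely the locus $\partial\Omega$ where perturbations in Definition~\ref{d4} vanish, so the hypotheses of Theorem~\ref{GENERAL2} (as extended by the boundary-integral remark, and with the weight as a benign $x'$-dependence) are met. The growth condition \eqref{EE}: here $|F_{pp}|\,|\nabla U|^2 \le C\, x_{n+1}^{1-2s}|\nabla U|^2$, so I need
\begin{equation*}
\int_{\R^{n+1}_+\cap B_R} x_{n+1}^{\,1-2s}\,|\nabla U|^2\,dx \le CR^2
\end{equation*}
for $n=2$, and when $n=3$, $s\in[1/2,1)$. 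For $n=2$ this is the energy estimate for the extension of a bounded solution, which grows like $R^2$ when $s\ge 1/2$ and like $R^{2s}\le R^2$ (up to the $\log$ correction of Remark~\ref{log} in the borderline cases) when $s<1/2$; these Dirichlet energy bounds are standard (Cabr\'e--Cinti, Sire--Valdinoci). For $n=3$ with $s\in[1/2,1)$ one invokes the energy estimate for monotone solutions in the extension, which again gives the bound $CR^2$ (possibly with a logarithmic factor absorbed by Remark~\ref{log}); this is the analogue in the fractional setting of Theorem~5.2 of \cite{AAC} and is due to \cite{CCinti, CCinti2}.

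Having checked the hypotheses, I apply Theorem~\ref{GENERAL2} in the $n=2$ stable case to conclude $U$ is monotone in one horizontal direction, and Theorem~\ref{t3} (or the monotonicity plus the subsequent one-dimensional symmetry machinery) in the $n=3$ monotone case: since $u_3>0$, $U$ is $\{e_1,e_2,e_3\}$-stable as a monotone critical point, hence one-dimensional in $(x_1,x_2,x_3)$. In either case $U(x,0)=u(x)$ inherits one-dimensional symmetry by Remark~\ref{R1}. The main obstacle I expect is not conceptual but bookkeeping: making sure the extension problem genuinely fits the class of functionals allowed (in particular that the degenerate/singular weight $x_{n+1}^{1-2s}$ at the boundary is exactly the kind of singularity the method was designed to tolerate, as advertised in the introduction via the $s$-nonlocal minimal surface example), and citing the correct energy-growth estimate in each dimension/range of $s$ — the $n=3$, $s\in[1/2,1)$ bound $CR^2$ being the delicate input, which is precisely why the statement restricts to $s\ge 1/2$ there. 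I would therefore structure the proof as: (i) set up the extension and its energy; (ii) record that \eqref{H1}--\eqref{H2} hold; (iii) quote the energy estimates giving \eqref{EE} (or \eqref{EE weak}) in each case; (iv) apply Theorem~\ref{GENERAL2} / Theorem~\ref{t3} and restrict to the trace.
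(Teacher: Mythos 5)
Your proposal is correct and follows essentially the same route as the paper: reduce via the Caffarelli--Silvestre extension to a weighted functional of the form \eqref{functional2} with $F(p,z,x)=x_1^{1-2s}|p|^2$ and $G(z,x)=\tilde F(z)$, verify \eqref{H2}, obtain \eqref{EE} (or \eqref{EE weak} with Remark~\ref{log} in the borderline case $n=3$, $s=1/2$) from the energy estimates of \cite{CCinti, CCinti2}, and conclude by Theorem~\ref{GENERAL2} and Remark~\ref{R1}. The only discrepancies are immaterial bookkeeping slips in the stated growth exponents for $n=2$ (the correct bound is $CR^{n-\min\{2s,1\}}$, with a $\log R$ factor at $s=1/2$), all of which are still dominated by $CR^2$.
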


\begin{proof}
We use the extension result in~\cite{Silv}
and therefore we reduce this problem
to an energy functional 
in~$(0,+\infty)\times\R^n$ as the one in~\eqref{functional2}
with~$${\mathcal{U}}:=(0,+\infty)\times \R^{n-1}, \quad
F(p,z,x):=x_{1}^{1-2s} |p|^2, \quad \quad G(z,x):=\tilde F(z),$$
where~$\tilde F$
is a primitive of~$f$ (notice that~$n$ in Theorem~\ref{GENERAL2}
must be replaced by~$n+1$ for this application). Then,
the desired energy growth follows from~\cite{CCinti, CCinti2}, according to which
$$ \EN_R(u)\le \left\{\begin{matrix}
CR^{n-\min\{2s,1\}} & {\mbox{ if }}s\ne 1/2,\\
CR^{n-1}\log R & {\mbox{ if }}s=1/2.
\end{matrix}
\right.$$
Therefore, \eqref{EE} is satisfied when~$n=2$ and also when~$n=3$
and~$s\in( 1/2,1)$
(on the other hand, when~$n=3$ and~$s=1/2$, \eqref{EE weak} is satisfied
and one has to make use of Remark~\ref{log}).
Thus
we obtain Theorem~\ref{DGs}
as a consequence of Theorem~\ref{GENERAL2}
and Remark~\ref{R1}.
\end{proof}

\subsection{Minimal surfaces}

Minimal surfaces in $\R^n$ can be thought as boundaries of sets $E \subset \R^n$ that minimize the BV-norm or the perimiter (see \cite{giusti}) 
$$ Per(\chi_E):=\int |D\chi_E|\,dx.$$
Although the functional $F$ does not satisfy precisely the conditions of our theorems, the methods of proof of the next two sections easily apply to this case as well. Then condition \eqref{EE} reads
$$Per(\chi_E, B_R)=\int_{B_R}   |D\chi_E|\,dx \le C R^2.$$
On the other hand the perimeter of a minimal surface in $B_R$ is bounded by the surface area of $\partial B_R$, that is $C R^{n-1}$, 
hence the only global minimal surfaces in $\R^3$ are the hyperplanes (one-dimensional). 

\subsection{Nonlocal minimal surfaces}
As mentioned in the Introduction we discuss a result
on nonlocal perimeters which was the original 
motivation for the techniques developed in this paper, see~\cite{CS}. 

Given a bounded domain~$\Omega\subset\R^n$, the minimization of 
the following functional was introduced in~\cite{CRS}:
$$ {\rm Per}_s(E,\Omega):= L(E\cap\Omega,\R^n \setminus E) + L(E 
\setminus\Omega,\Omega\setminus E),$$
where $s\in(0,1)$ and for any disjoint measurable sets~$A$ and~$B$,
$$ L(A,B) :=\int_A\int_B\frac{dx\,dy}{|x-y|^{n+s}}.$$
The regularity of $s$-minimal surfaces (i.e. of the boundary of
a set~$E$ which minimizes~$ {\rm Per}_s(\cdot,\Omega)$
among all the measurable sets that agree with~$E$ outside $\Omega$)
and of $s$-minimal cones (i.e. of $s$-minimal surfaces~$E$ such 
that are invariant under dilations)
has been studied in some recent papers, such as~\cite{CRS, CV2, CS, Bego}.
In particular, a complete regularity theory holds in the plane, according 
to the following result, that may also be obtained as a byproduct
of the results in this paper:

\begin{theorem}[\cite{CS}]\label{SVs}
If $E$ is an $s$-minimal cone in~$\R^2$, then $E$ is a 
half-plane.
\end{theorem}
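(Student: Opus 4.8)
\textbf{Proof proposal for Theorem~\ref{SVs}.}
The plan is to realize the $s$-perimeter as an energy functional of the type treated in the earlier theorems, using the Caffarelli--Silvestre type extension that identifies the fractional perimeter with a weighted Dirichlet energy in one extra dimension. Concretely, if $E\subset\R^2$ is an $s$-minimal cone and $\chi_E$ its characteristic function, one works in $\R^2\times(0,+\infty)$ with $x=(x_1,x_2,y)$, $y>0$ the extension variable, and the functional
$$\int_{\R^2\times(0,+\infty)} y^{a}\,|\nabla U|^2\,dx\,dy,$$
for the appropriate exponent $a=1-2s$, where $U$ is the extension of $\chi_E$; this is exactly of the form $\int F(\nabla u,u,x')\,dx$ with $F(p,z,x')=y^{a}|p|^2$ and $\mathcal{U}=(0,+\infty)\times\R$ (after relabelling so that the weight sits on the first coordinate, as in the proof of Theorem~\ref{DGs}). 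First I would record that $F$ satisfies \eqref{H1}--\eqref{H2}: it is smooth and uniformly convex in $p$ away from the degeneracy locus, and the weight $y^a$ only multiplies $|p|^2$ so the growth condition \eqref{H2} on $F_{pp}$ holds trivially. As flagged in the ``Nonlocal minimal surfaces'' and ``Minimal surfaces'' paragraphs, one should note that $\chi_E$ is not smooth, so strictly the hypotheses of Theorems~\ref{GENERAL}--\ref{GENERAL2} are not met verbatim; but, as the authors remark for the BV case, the local-analysis argument of Section~\ref{0055} and the energy-decay argument of Section~\ref{s3} go through for this geometric functional with only routine modifications, comparing $\chi_E$ with $\chi_{E}\cup\chi_{E+te_n}$ and its compactly-supported truncation at infinity.

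The key point is then the energy growth estimate \eqref{EE}. Here one exploits that $E$ is a \emph{cone}: by scaling, ${\rm Per}_s(E,B_R)=R^{2-s}\,{\rm Per}_s(E,B_1)$, which grows like $R^{2-s}\le CR^2$, so \eqref{EE} (in fact with room to spare) holds in ambient dimension $n=2$. Equivalently, on the extension side the weighted Dirichlet energy of $U$ over $B_R$ is bounded by $CR^{n-\min\{2s,1\}}$ (the same bound quoted in the proof of Theorem~\ref{DGs}, now with $n=2$), which is $\le CR^2$; when $s=1/2$ one gets the borderline $CR\log R$ growth and invokes Remark~\ref{log}. Thus the hypotheses of the stable/minimizer version of the main theorem are in force: $\chi_E$, being a minimizer of ${\rm Per}_s$, is in particular $\{e_n,e_{n+1}\}$-stable (indeed an $e_n$-minimizer) for the corresponding energy.

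Applying Theorem~\ref{GENERAL2} (adapted to this functional, with $n$ replaced by $n+1=3$ to account for the extension variable), we conclude that $U$ — hence $\chi_E$ — is monotone in the $e_2$-direction; running the same argument with $e_2$ replaced by any horizontal direction, or directly invoking the one-dimensional symmetry conclusion as in Remark~\ref{R1}, we get that $\chi_E$ is one-dimensional, i.e. $\chi_E(x)=g(x\cdot\xi)$ for some unit $\xi\in\R^2$. Since $\chi_E$ takes only the values $0$ and $1$ and $E$ is a cone, $g$ must be a step function with jump at the origin, so $E$ is a half-plane.

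The main obstacle I anticipate is not the scaling estimate, which is immediate for a cone, but the \emph{justification that the abstract machinery applies to the non-smooth, geometric functional}: one must check that the local energy-decrease construction of Section~\ref{0055} (sliding and taking max/min) and the truncation-at-infinity estimate of Section~\ref{s3} are valid when $u=\chi_E$ is merely $BV$ rather than $C^1$, and that the comparison function stays admissible (the perturbation $\psi^{(i)}$ is allowed to be non-zero on $\partial\Omega=\{y=0\}$, which is exactly the boundary where the weight degenerates — this is precisely the difficulty the authors single out in the Introduction as the reason for avoiding the stability inequality in its classical form). Once one grants, as the authors assert, that ``the methods of proof of the next two sections easily apply to this case as well,'' the rest is bookkeeping.
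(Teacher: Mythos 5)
Your proposal follows essentially the same route as the paper: reduce to the weighted extension energy on $(0,+\infty)\times\R^2$, verify \eqref{EE} by the homogeneity of the cone, and apply Theorem~\ref{GENERAL2}; the paper's proof is exactly this, and your first (scaling) justification of \eqref{EE}, giving growth $R^{2-s}\le CR^2$, is the one the paper intends. Two slips worth correcting: the extension weight for ${\rm Per}_s$ is $x_1^{1-s}$ (as stated in the Introduction and in the paper's proof), not $y^{1-2s}$, which is the weight for the fractional De Giorgi setting of Theorem~\ref{DGs}; and your secondary appeal to the bound $CR^{n-\min\{2s,1\}}$ with the $s=1/2$ logarithmic borderline belongs to Theorem~\ref{DGs}, not here --- for a cone the homogeneity argument gives \eqref{EE} with room to spare for every $s\in(0,1)$, so Remark~\ref{log} is not needed.
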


\begin{proof}
By the extension result in Section~7 of~\cite{CRS}, we reduce
the problem to a variational energy in~$(0,+\infty)\times\R^2$,
with~$$F(p,z,x):=x_1^{1-s} |p|^2,$$ for a minimizer homogenous of degree $0$. Then, \eqref{EE} easily follows in dimension $n=2$, and so we may use again
Theorem~\ref{GENERAL2}.
\end{proof}

\subsection{Two-phase free boundary problem}
This classical free boundary problem (see \cite{AC}, \cite{ACF}) consists in minimizing the energy
$$\int |\nabla u|^2 dx + |\{u >0\}|.$$
In this case condition \eqref{EE} becomes
$$\int_{B_R} |\nabla u|^2 dx \le C R^2,$$
which is clearly satisfied by a Lipschitz minimizer in dimension $n=2$. In conclusion, in $\R^2$ any Lipschitz minimizer for the two-phase problem must be one-dimensional.

\subsection{Thin one-phase problem} In this free boundary problem we minimize the following energy in $\R^{n+1}_+$ (see \cite{DS})
$$\int_{\R^{n+1}_+} |\nabla u|^2 dX + \mathcal H^n(\{ u(x,0)>0\}),$$
where we denote the points in $\R^{n+1}$ by $X=(x,x_{n+1})$.

Our results imply that in dimension $n=2$, any homogenous minimizer must be one-dimensional in the $x$ variable. This follows easily from \eqref{EE} since, due to the scaling of the energy, any homogeneous minimizer must be homogenous of degree $1/2$.

\section{Local perturbations}\label{0055}

In this section we show that in general we can perturb locally $$\max\{ u(x), u(x+te_n)\}$$ into a function with lower energy.

The first lemma states that the maximum of two functions that form an 
angle at an intersection point
cannot be an $e_n$-minimizer 
for $\EN$ 
(this fact uses the strict convexity of~$F$ in the $p$ variable).

\begin{lemma}\label{l1}
Assume $0 \in \Omega$ and $u$, $v$ are $C^1$-functions such that 
\begin{equation}\label{angle}
{\mbox{$u(0)=v(0)$ and $v_n(0)<0<u_n(0)$.}}
\end{equation}
Then $g:=\max\{u,v\}$ is 
not 
an $e_n$-minimizer for $\EN$ in any ball $B_\eta$. 
\end{lemma}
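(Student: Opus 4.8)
The plan is to exploit the corner that $g=\max\{u,v\}$ forms at the origin: near $0$, the graph of $g$ has a concave kink along the hypersurface $\{u=v\}$, and this kink is a source of excess energy that can be removed by a small deformation in the $e_n$-direction. Concretely, since $u(0)=v(0)$ and $u_n(0)>0>v_n(0)$, after a translation we may assume the origin is on $\{u=v\}$ and, by the implicit function theorem (using $u_n-v_n\ne 0$ at $0$), the set $\Gamma=\{u=v\}$ is, near $0$, a $C^1$ graph $x_n=h(x')$; moreover $u>v$ on one side (say $x_n>h(x')$) and $u<v$ on the other, so $g=u$ above $\Gamma$ and $g=v$ below. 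The key local picture is: $g$ is Lipschitz but has a jump in its $e_n$-derivative across $\Gamma$, from $v_n<0$ (just below) to $u_n>0$ (just above), i.e. the kink opens \emph{downward}.

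Next I would construct the competitor as an $e_n$-Lipschitz deformation $g(x+\psi(x)e_n)$ with $\psi$ supported in a small ball $B_\eta$. The idea is to choose $\psi$ so that the deformation pushes the graph slightly \emph{down} near the kink, i.e. fills in the concave corner; equivalently one slides the two branches so they overlap a little, replacing $\max\{u,v\}$ near $\Gamma$ by something strictly below it. The cleanest way: work in coordinates straightening $\Gamma$ to $\{x_n=0\}$, and take $\psi(x)=\epsilon\,\eta^2\,\zeta(x'/\eta)\,\beta(x_n/\eta)$ with $\zeta$ a fixed bump and $\beta$ an odd-ish profile, so that the deformed function equals $u(x',x_n+\epsilon\eta\cdot(\dots))$ for $x_n$ near $0^+$ and $v$ correspondingly for $x_n<0$, with the net effect of decreasing the $e_n$-slope gap. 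Then I would compute $\EN_{B_\eta}(g\circ(\mathrm{id}+\psi e_n))-\EN_{B_\eta}(g)$ by a first-order Taylor expansion in $\epsilon$. Away from $\Gamma$ the integrand is $C^1$ and the first variation is the Euler--Lagrange term, which for a deformation supported away from $\Gamma$ would integrate to something $O(\epsilon)$ controlled by the equation; but the dominant contribution comes from the \emph{jump of $F_p(\nabla g,g,x')\cdot e_n$ across $\Gamma$}, which by the uniform convexity of $F$ in $p$ (hypothesis~\eqref{H1}, active since $u_n,v_n\ne 0$) is strictly negative of order $(u_n(0)-v_n(0))$ times a positive constant. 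Thus the leading term in $\epsilon$ is $-c\,\epsilon\,\eta^{n-1}\cdot\eta<0$ for a fixed $c>0$, and for $\epsilon$ small this beats the remainder, giving $\EN_{B_\eta}(v)<\EN_{B_\eta}(g)$ with a genuine $e_n$-deformation $v\in D_{B_\eta}(g)$. Hence $g$ is not an $e_n$-minimizer in $B_\eta$, for every $\eta$.

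The main obstacle is making the first-variation computation rigorous across the non-smooth set $\Gamma$: $g$ is only Lipschitz there, so one cannot differentiate under the integral sign naively, and the gain must be extracted as a boundary (jump) term via integration by parts on each side of $\Gamma$ separately. The clean way to handle this is to split $B_\eta$ into $B_\eta^+=\{x_n>h(x')\}$ and $B_\eta^-$, expand the energy on each piece where $g$ is $C^1$, integrate by parts, and collect the interface terms; the bulk terms either cancel or are $O(\epsilon^2)+O(\epsilon\eta^{n})$ while the interface term is $-c\epsilon\eta^{n-1}$ up to the choice of $\psi$, and one must be careful that the second-order remainder in the Taylor expansion of $F$ is controlled — here the growth condition~\eqref{H2} on $F_{pp}$ is exactly what guarantees $|F_{pp}(\nabla g+q,\dots)|\le C|F_{pp}(\nabla g,\dots)|$ for the small shifts $q$ induced by $\psi$ (since $\|\psi_n\|_\infty<1$ keeps $|q|\le|p_n|/2$ when $\psi$ is small enough), so the remainder stays bounded by $C\epsilon^2\sup|F_{pp}||\nabla g|^2$ on $B_\eta$. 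A secondary technical point is ensuring the deformation is admissible in the sense of Definition~\ref{d1}, i.e. $\|\psi_n\|_{L^\infty}<1$; this is automatic since $\psi=O(\epsilon\eta^2)$ with $\eta$ fixed and $\epsilon\to0$. Once the sign of the leading term is pinned down, the contradiction with $e_n$-minimality is immediate.
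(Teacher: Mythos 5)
There is a genuine gap: the competitor you construct is a \emph{single} $e_n$-Lipschitz deformation $g(x+\psi(x)e_n)$ in the sense of Definition~\ref{d1}, and such deformations cannot in general decrease the energy of $\max\{u,v\}$. The paper's own Remark~\ref{ABS} (proved in Section~\ref{details}) gives the counterexample $g(t)=|t|=\max\{t,-t\}$, $F=|p|^2$, which satisfies \eqref{angle} and yet minimizes against \emph{every} single $e_n$-Lipschitz deformation: $\EN_R(g(\cdot+\psi))-\EN_R(g)=\int(\psi')^2\ge0$. The first-order term you want to exploit is illusory: for a domain deformation the linear-in-$\psi$ contribution reduces (after integrating along $e_n$-lines) to $\big(F_0(\nabla v(0))-F_0(\nabla u(0))\big)$ times the displacement of the corner, plus higher order; and since subtracting $p_0\cdot p$ from $F$ changes $\EN_R$ only by a boundary term, one may always normalize $F_0(\nabla u(0))=F_0(\nabla v(0))$, after which this term vanishes identically. (Separately, the ``jump of $F_p\cdot e_n$ across $\Gamma$'' is the first variation against \emph{vertical} perturbations $\phi$, which are not admissible for $e_n$-minimality; and the corner of $\max\{u,v\}$ with $u_n(0)>0>v_n(0)$ is convex, a V-shape, not a downward kink, so even that vertical variation favors $\phi>0$ rather than pushing down.)

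What is actually needed — and what the paper does — is a \emph{piecewise} deformation in the sense of Definition~\ref{d2}: the two branches $u$ and $v$ must be slid by \emph{different} amounts so that they overlap and the corner is cut transversally. Concretely, the paper blows up at $0$ (after the normalization above), reducing to the corner $g_0=\max\{u_0,v_0\}$ of two transversal linear functions, and beats it by $\max\{g_0,h_0\}$ with $h_0=1+\alpha u_0+(1-\alpha)v_0-\rho_R(x')$: the energy gain is a fixed amount coming from strict convexity of $F_0$ in $p$ (Jensen applied to the two gradients), a zeroth-order effect, not a first-variation one. Admissibility of this competitor as a piecewise deformation is then checked via the Implicit Function Theorem using $u_n(0)>0>v_n(0)$. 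Your remarks on controlling the second-order remainder via \eqref{H2} and on $\|\psi_n\|_\infty<1$ are fine but moot, since the leading term of your expansion does not have the claimed sign.
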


\begin{remark}\label{F5}
{\rm In our setting,
the transversal intersection
described analytically by~\eqref{angle}
can be obtained whenever $u$ is not monotone on each line along the $e_n$-direction. In this case we may reduce to the case in which~$u(\bar  
x+a_1 e_n)<u(\bar x+a_2e_n)$ and~$u(\bar x+a_2e_n)>u(\bar x+a_3e_n)$,
with~$a_1<a_2<a_3$. 
Let~$c_i:=u(\bar x+a_i e_n)$. 
Then, by Sard's theorem we can find a regular value ~$c\in \big( \max\{c_1,c_3\},\, c_2\big)$ of $u$, thus we may find~$\alpha_c\in(a_1,a_2)$
and~$\beta_c\in (a_2,a_3)$ such that~$u(\bar x+\alpha_c e_n)=c=
u(\bar x+\beta_c e_n)$ and ~$u_n (\bar x+\alpha_c e_n)
> 0 >
u_n (\bar x+\beta_c e_n)$.
Then, the setting
of~\eqref{angle} is fulfilled by supposing, up to translations, that~$
\bar x+\alpha_c e_n=0$ and by taking~$v(x):=u(x+(\beta_c-\alpha_c)e_n)$.
}\end{remark}

\begin{proof}[Proof of Lemma~\ref{l1}]
Assume by contradiction that $g$ is an $e_n$-minimizer in some small 
ball $B_\eta$. We define~$F_0(p):=F(p,0,0)$, 
and we claim that we may reduce to the case in which
\begin{equation*}\label{reduce}
F_0(\nabla u(0))=F_0(\nabla v(0)).
\end{equation*}
To see this we notice that the property of minimality is not affected after subtracting a linear functional from $F$. Precisely if  $$\tilde F(p,z,x):=F(p,z,x)-p_0\cdot p,$$ and $ \tilde\EN_R$ is the associated energy functional for $\tilde F$ in $B_R$ then
$$\EN_R(f)- \tilde \EN_R(f)= \int_{B_R}p_0\cdot \nabla f \,dx
=\int_{\partial B_R}  f \, p_0\cdot\nu.
$$
That is,~$\tilde \EN_R(f)$ and~$\EN_R(f)$ only differ by a term depending on the boundary
values of~$f$. Consequently, if~$f$ is an~$e_n$-minimizer for~$\EN$, it is also an~$e_n$-minimizer
for~$\tilde\EN$.

Also, by possibly translating~$F$ in the~$z$-variable,
we may assume that $u(0)=v(0)=0$.
Now, for small~$r>0$, we consider the rescalings 
$$u_r(x):=r^{-1}u(rx), \quad   v_r(x):=r^{-1}v(rx)$$
and we define~$g_r(x):=\max\{ u_r(x),v_r(x)\}$.
Then,~$g_r$
is an $e_n$-minimizer for the rescaled functional
$$F_r(p,z,x):=F(p, rz,rx)$$
in $B_{\eta/r}$.
As $r \to 0^+$ then the following limits hold uniformly on compact sets:
\begin{equation}\label{G3}\begin{split}
& F_r \to F_0(p),
\\ & u_r (x)\to u_0(x):=\nabla u(0) \cdot x, \quad \quad \nabla u_r 
\to \nabla u_0,\\ &v_r(x) \to v_0(x):= \nabla v(0) \cdot  x, \quad \quad \nabla v_r \to 
\nabla v_0.\end{split}\end{equation}
So we let
$$g_0=\max\{u_0,v_0\}.$$ 
From the strict convexity of $F$ in the $p$ variable we see 
that~$g_0$
is not a minimizer for $F_0$. 
Indeed we first construct $h_0$, 
$$h_0:=1+ \alpha u_0 +(1-\alpha) v_0-\rho_R(x'), \quad \quad \rho_R(x'):=\max\{0,|x'|-R\}$$
for some $\alpha \in (0,1)$ small and $R$ large. Then
\begin{equation*}\label{G4}
\max\{g_0, h_0\},
\end{equation*}
coincides with $g_0$ outside $B_{R+C}$  and notice that
in $B_R$ we are cutting the graphs of two transversal linear functions by
a single one. This function has lower energy for~$F_0$
than the one of~$g_0$ provided that we choose $R$ sufficiently large. 

By using the uniform convergence in~\eqref{G3}, we see that
$$h_r:=\max\{ g_r, h_0 \},$$ 
has lower energy for $F_r$ than the one of~$g_r$.

Scaling back, we have that~$h_\star(x):=r h_r(x/r)$
has less energy for~$F$ in~$B_{r(R+C)}\subseteq B_\eta$
than the one of~$g$.
To reach a contradiction, it remains to check
that~$h_\star$ is indeed an allowed perturbation according to
Definition~\ref{d2}. This is equivalent to say that $h_r$ is a piecewise Lipschitz domain deformation of $g_r$ with the Lipschitz norm bounded by $\delta$. 

To obtain this, we use our hypothesis 
$\nabla u_0 \cdot e_n >0 >\nabla v_0 \cdot e_n$
 and the uniform convergence (in $C^1$) of $u_r$ and $v_r$ to $u_0$ respectively $v_0$.
 Then, by the Implicit Function Theorem, the part of the graph of $h_r$ where $h_0 > g_r$
  is obtained from $u_r$ by a Lipschitz domain deformation with Lipschitz norm less than $\delta$, 
  provided that $\alpha$ is chosen sufficiently small.
\end{proof}

\begin{remark}\label{F6}
{\rm
In the proof we also showed that if $u$, $v$ are $C^1$ functions with $u(0)=v(0)$ and $\nabla u(0) \ne \nabla v(0)$ then $g:=\max\{u,v\}$ is not a classical minimizer for $\EN$ in $B_\eta$. 
}
\end{remark}

The second lemma deals with perturbations for $\max\{u(x), u(x+te_n) \}$ 
(for small $t$) near a non-degenerate point on $\{u_n=0\}$.

\begin{lemma}\label{l2}
Assume that $u \in C^2(\Omega)$ is a critical point for the energy $\EN$ 
in a 
neighborhood of the origin and the functional $F \in C^2$ in a neighborhood of $(\nabla u(0),u(0),0)$. Assume that
\begin{equation}\label{NoG}
u_n (0) =0, \quad \nabla u_n(0) \ne 0\end{equation}
and let \begin{equation}\label{3.7a}
w(x):=\max\{u(x), u(x+t e_n) \}.\end{equation}
Then, for any $\eta>0$, 
there exists a 
Lipschitz function $\varphi$ with compact support in $B_\eta$ such that
$$\EN_\eta(w+t \varphi)-\EN_\eta(w) \le - c t^2  \quad \quad \mbox{for 
all 
$t$ small,}$$
for some small $c>0$ depending on $u$, $F$ and $\eta$.
\end{lemma}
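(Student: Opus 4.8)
\textbf{Proof plan for Lemma \ref{l2}.}
The plan is to Taylor-expand the energy of $w+t\varphi$ around $w$ and pick $\varphi$ so that the second-order term is strictly negative. First I would set up notation: write $g(x):=u(x+te_n)$, so $w=\max\{u,g\}$, and note that since $u_n(0)=0$ while $\nabla u_n(0)\neq 0$, after a rotation in $\R^{n-1}$ we may assume $\partial_{x_1}u_n(0)\neq 0$. Then $\{u=g\}$ near the origin is, by the implicit function theorem, a smooth hypersurface $\Gamma_t$ that is an $O(t)$ perturbation of the hyperplane $\{u_n=0\}$ (indeed $u(x)-g(x)=-t\,u_n(x)+O(t^2)$, and $u_n$ vanishes transversally). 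On one side of $\Gamma_t$ we have $w=u$ and on the other $w=g$; the key geometric point is that the two smooth pieces meet along $\Gamma_t$ forming a genuine corner, because the jump in the normal derivative of $w$ across $\Gamma_t$ is of order $|\nabla(u-g)|=|t\nabla u_n|+O(t^2)\sim ct$.

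Next I would compute the energy cost of a vertical perturbation that rounds off this corner. Fix a cutoff-type profile: let $\varphi$ be supported in $B_\eta$, with $\varphi\equiv 1$ on $B_{\eta/2}$, and consider $w+t\varphi$ (the scaling by $t$ matches the size of the corner). The energy difference splits into two parts. Away from a thin neighborhood of $\Gamma_t$, both $w$ and $w+t\varphi$ are smooth, $u$ is a critical point, and $g$ is a critical point for the translated functional (which equals $\EN$ by $e_n$-invariance); so the first variation vanishes there and the contribution is $O(t^2)\cdot\!\int|\varphi|^2$ plus lower order, which is $O(t^2)$ but with a constant we must track. The decisive contribution comes from the corner: crossing $\Gamma_t$, the function $w$ has a downward kink of opening angle $\sim ct$, and adding $t\varphi$ (with $\varphi$ roughly constant $=1$ there) effectively lifts the corner tip, replacing near the tip a concave kink of two affine-in-$p$ branches by a smoother graph. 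By the uniform convexity of $F$ in $p$ (hypothesis \eqref{H1}, used exactly as in Lemma \ref{l1}), smoothing a corner of size $\sim t$ in a region of fixed size gains energy of order $t^2$ with a \emph{strictly negative, fixed} constant $-c_0$, where $c_0$ depends on the modulus of convexity of $F$, on $|\nabla u_n(0)|$ and on $\eta$. Here one should be slightly careful that $\varphi\equiv1$ near $\Gamma_t$ forces $w+t\varphi$ to genuinely be below $w$ from above near the tip; alternatively, mimic Lemma \ref{l1} directly by writing the perturbation as $\max\{w,\;h\}$ for a suitable subsolution-type comparison plane $h=u+t\varphi$ localized near $0$, and the $e_n$-deformation bookkeeping of Definition \ref{d2} is not needed here because Definition \ref{d5} already allows the $e_{n+1}$-perturbation $\varphi$.

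Putting the two pieces together: $\EN_\eta(w+t\varphi)-\EN_\eta(w)\le -c_0 t^2 + C_1 t^2\|\varphi\|_{L^2(B_\eta\setminus B_{\eta/2})}^2 + o(t^2)$, where the middle term comes only from the region where $\varphi$ is not locally constant and $w$ is smooth; by choosing the cutoff $\varphi$ to transition on a scale comparable to $\eta$ (so that $\|\nabla\varphi\|$ and $\|\varphi\|$ are controlled purely by $\eta$, independently of $t$), and then taking $t$ small, the negative term dominates and we get the claimed bound with $c:=c_0/2>0$. I expect the main obstacle to be making the ``corner-rounding gains order $t^2$'' step quantitative: one must show the $O(t)$-sized kink produces an energy deficit bounded below by $c_0t^2$ uniformly in $t$, which requires rescaling by $t$ near $\Gamma_t$ (as in the proof of Lemma \ref{l1}: set $x=t y$, so the rescaled functions converge to two transversal affine functions of $y$ and the rescaled functional converges to the frozen $F_0$), checking that the gain in the limiting frozen problem is a fixed positive constant by strict convexity, and then transferring it back. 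The second delicate point is keeping the ``error'' constant $C_1$ in the smooth region independent of $t$ — which holds because $u\in C^2$ and $F\in C^2$ near $(\nabla u(0),u(0),0)$, so all coefficients in the second-order Taylor expansion are bounded on $B_\eta$ uniformly for $t$ small.
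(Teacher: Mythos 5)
Your overall strategy (expand to second order, kill the first-order term by criticality of $u$, and extract a fixed negative $t^2$-contribution from the corner of $w$ via strict convexity of $F$ in $p$) is the right one, and your parenthetical ``alternatively, take $\max\{w,h\}$'' is the germ of the correct construction. But the two steps you yourself flag as decisive are the ones that fail as written. First, your primary competitor does not produce any gain: if $\varphi\equiv 1$ near $\Gamma_t$, then $w+t\varphi$ is a pure vertical translation of $w$ there, so $\nabla(w+t\varphi)=\nabla w$ and the kink is completely unchanged; uniform convexity in $p$ sees only gradients and gives you nothing. To gain from convexity you must actually change the gradient across the corner, i.e.\ cut the two transversal branches by a single graph, exactly as in the construction of $h_0$ in the proof of Lemma~\ref{l1}; the perturbation $\varphi$ is then $\bigl(h-w\bigr)^+/t$, not a cutoff.

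Second, the scale is wrong. Rescaling by $t$ near $\Gamma_t$ confines the competitor to a ball of radius $O(t)$, where even a fixed \emph{relative} energy improvement yields only $O(t^2\cdot t^n)=o(t^2)$ — not the required $-ct^2$ with $c$ independent of $t$. The gain must be harvested on a region of size comparable to $\eta$, fixed as $t\to0$. The paper achieves this by linearizing instead of blowing up: writing $v:=(u(\cdot+te_n)-u)/t$, so $w=u+tv^+$ and $v\to u_n$ in $C^{0,1}$, one gets
$$\EN_\eta(w+t\varphi)-\EN_\eta(w)=t^2\bigl(Q(u_n^++\varphi)-Q(u_n^+)\bigr)+o(t^2),$$
where $Q$ is the ($t$-independent) second variation of $\EN$ at $u$; the replacement of $v^+$ by $u_n^+$ uses \eqref{NoG} to bound the measure of $\{|u_n|\le\mu\}$ by $C\mu$. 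Then Remark~\ref{F6} applied to the quadratic functional $Q$ and the pair $u_n$, $0$ — which cross transversally at the origin precisely because of \eqref{NoG} — produces a single, $t$-independent Lipschitz $\varphi$ with $Q(u_n^++\varphi)\le Q(u_n^+)-c$ for a fixed $c>0$. Note that the statement of the lemma asks for one $\varphi$ working for all small $t$, which your $t$-dependent rescaled construction would not deliver directly. So the missing ingredient is precisely this reduction to the frozen quadratic functional, which converts the $t$-sized corner into a fixed transversal crossing to which Lemma~\ref{l1} already applies.
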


\begin{proof}
Let \begin{equation}\label{3.7bis}
v(x):=\frac{u(x+te_n)-u(x)}{t}\end{equation}
and notice that
\begin{equation}\label{v-u}
\|v-u_n\|_{C^{0,1}(B_\eta)}=o(1) \quad \mbox{as $t \to 0$}.
\end{equation}
Given a Lipschitz function $g$ we use that $F \in C^2$ in the $(p,z)$ variables and obtain
$$\EN_\eta(u+t g)=\EN_\eta(u) + t L(g) + t^2 Q(g) + o(t^2)$$
with
$$L(g):=\int_{B_\eta} F_p \cdot \nabla g + F_z \, g \, dx , $$
$$Q(g):= \int_{B_\eta} G(\nabla g, g,x) \, dx =
\int_{B_\eta} (\nabla g)^TF_{pp} \nabla g + 
2 g F_{pz} \cdot \nabla g + F_{zz} g^2 \, \, dx.$$
In the integrals above the function $F$ and its derivatives are evaluated at $(\nabla u,u,x)$ and the constant in the error term $o(t^2)$ depends on $u$, $F$ and $\|g\|_{C^{0,1}(B_\eta)}$.
Since $u$ is a critical point for $\EN$ we see that if $\varphi$ has compact support in $B_\eta$ then
\begin{equation}\label{08}
\EN_\eta (u+t v^+ + t \varphi) - \EN_\eta (u + t v^+)= t^2(Q(v^++ 
\varphi)-Q(v^+)) +o(t^2).\end{equation} 
{F}rom \eqref{3.7a} and \eqref{3.7bis}, we see that
\begin{equation}\label{080}
w=u+tv^+.\end{equation} Also, 
we claim that, if~$\eta$ is sufficiently small,
\begin{equation}\label{09}
Q(v^+)-Q(u_n^+)=o(1) \ {\mbox{ and }} \
Q(v^+ +\varphi)-Q(u_n^+ +\varphi)=o(1).
\end{equation}
We prove the first relation, the second being analogous.
For this, we fix~$\mu>0$ and we define~${\mathcal{A}}_\mu:= B_\eta\cap 
\{|u_n|\le\mu\}$ and~${\mathcal{B}}_\mu:= B_\eta\cap
\{|u_n|>\mu\}$.
{F}rom~\eqref{v-u}, we have that
\begin{equation}\label{090}
\lim_{t\to0} \|v^+-u_n^+\|_{C^{0,1}({\mathcal{B}}_\mu)}=
\lim_{t\to0}\|v-u_n\|_{C^{0,1}({\mathcal{B}}_\mu)}=0.\end{equation}
On the other hand,
since~$\nabla u_n(0) \ne 0$,
for small~$\eta$ we have that the measure of~${\mathcal{A}}_\mu$
is (at most)
of the order of~$\mu$. This and~\eqref{v-u}
yield that
$$ \lim_{t\to0}|Q(v^+)-Q(u_n^+)|\le C\mu$$
and so~\eqref{09} follows since~$\mu$ can be taken arbitrarily small.

{F}rom~\eqref{09}
we see that if $\eta$ is sufficiently small
we can replace $v^+$ by $u_n^+$ in the right hand side of~\eqref{08}:
accordingly, recalling also~\eqref{080}, we obtain
\begin{equation}\label{u7}
\EN_\eta(w+t\varphi)-\EN_\eta(w)=t^2(Q(u_n^+ 
+\varphi)-Q(u_n^+))+o(t^2).\end{equation}
On the other hand $u_n$, $0$ and $G$ satisfy the hypotheses of 
Remark~\ref{F6}, hence $u_n^+$ is not a minimizer of $Q$. Thus we can 
choose 
$\varphi$ such that
$$ Q(u_n^+ +\varphi)\le Q(u_n^+)-c$$
for some small~$c>0$, possibly
depending on $u$, $F$ and $\eta$.
So, by~\eqref{u7},
$$ \EN_\eta(u+t v^+ + t \varphi) - \EN_\eta(u + t v^+) \le -\frac{c}2 
t^2$$
for all small $t$.
\end{proof}

\begin{remark}\label{r2} {\rm If $\nabla u(0) \ne 0$ then the function $w+t 
\varphi$ 
can be interpreted (via the Implicit Function Theorem)
as a Lipschitz domain deformation of $w$ in the $\nabla u(0)$-direction 
(see Definition \ref{d1}) and the $C^{0,1}$-norm of the deformation is 
bounded by $C t$. 
Notice that, in general, the $\nabla u(0)$-direction and the $e_n$-direction 
are different.
}\end{remark}  

The non-degeneracy hypothesis $\nabla u_n \ne 0$ of Lemma~\ref{l2} can 
be checked easily from Hopf lemma if $F\in C^3$ in a neighborhood of $\nabla u(0)$, as next result
points out.

\begin{lemma}\label{L3}
Assume that $u\in C^1(\Omega)$ is a critical point for $\EN$ and $F\in 
C^3$ in 
a neighborhood of $(\nabla u(0),u(0),0)$. If $u_n(0)=0$ and $u_n$ does not vanish identically in a neighborhood of $0$ then there exists a point $x_0$ close to $0$ such that $u_n(x_0)=0$, $\nabla u_n(x_0) \ne 0$. 
\end{lemma}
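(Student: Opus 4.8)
The plan is to derive a linear uniformly elliptic equation for $v:=u_n$ in a neighborhood of the origin, and then to produce $x_0$ by a Hopf-lemma argument on a nodal component of $v$.

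First, fix a small ball $B_r=B_r(0)\subseteq\Omega$ so that along the curve $x\mapsto(\nabla u(x),u(x),x)$ one stays in the region where $F\in C^3$. Although $u_n(0)=0$ forces the $n$-th component of $\nabla u(0)$ to vanish, the uniform convexity in~\eqref{H1}, combined with the continuity of $F_{pp}$ near $(\nabla u(0),u(0),0)$ (which holds since $F\in C^3$ there), gives $F_{pp}(\nabla u(x),u(x),x)\ge\lambda\,\mathrm{Id}$ for all $x\in B_r$ and some $\lambda>0$, after shrinking $r$. Hence the Euler--Lagrange equation $\partial_i\big(F_{p_i}(\nabla u,u,x)\big)=F_z(\nabla u,u,x)$ is uniformly elliptic in $B_r$, and by standard elliptic regularity (De Giorgi--Nash--Moser followed by Schauder estimates) $u\in C^{2,\alpha}(B_r)$, after a further shrinking. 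Differentiating the Euler--Lagrange equation with respect to $x_n$, which is legitimate because $F$ does not depend on $x_n$, one finds that $v=u_n$, which lies in $C^1(B_r)$, is a weak solution in $B_r$ of a linear divergence-form equation
$$Lv:=\partial_i\big(a_{ij}\,\partial_j v+b_i\,v\big)-c_j\,\partial_j v-e\,v=0,\qquad a_{ij}:=F_{p_ip_j}(\nabla u,u,x),$$
uniformly elliptic with bounded continuous coefficients on $B_r$ (membership of $v$ in $C^1$ is all that the Hopf argument below requires).

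Next, I would locate a boundary zero of a positivity set of $v$. Since $u_n(0)=0$ and $u_n$ vanishes on no neighborhood of $0$, in every neighborhood of $0$ there is either a point where $v>0$ or a point where $v<0$; fix the sign that occurs arbitrarily close to $0$, say $v>0$ (the other case follows by replacing $v$ with $-v$, which affects none of the conclusions below). Choose $p\in B_{r/2}$ with $v(p)>0$ and $|p|$ as small as we like, and set $\rho:=\mathrm{dist}\big(p,\{v\le 0\}\big)$; since $v(0)=0$ we have $0<\rho\le|p|$, so $v>0$ on $B_\rho(p)$, $\overline{B_\rho(p)}\subseteq B_r$, and there is a point $x_0$ with $|x_0-p|=\rho$ and $v(x_0)=0$. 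Moreover $|x_0|\le\rho+|p|\le 2|p|$, so $x_0$ lies as close to $0$ as we wish.

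Finally, I would apply Hopf's boundary point lemma to $v$ in the ball $B_\rho(p)$ at $x_0\in\partial B_\rho(p)$: one has $v\in C^1\big(\overline{B_\rho(p)}\big)$, $v>0$ inside, $v(x_0)=0$ (so $x_0$ is a boundary minimum), and $Lv=0$ with $L$ uniformly elliptic and with bounded coefficients; since the boundary value $v(x_0)$ equals $0$, no sign hypothesis on the zeroth-order coefficient $e$ is required. Therefore $\partial_\nu v(x_0)<0$ for the outward unit normal $\nu=(x_0-p)/\rho$, so in particular $\nabla u_n(x_0)=\nabla v(x_0)\ne0$; combined with $u_n(x_0)=0$, this is the assertion. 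The step I expect to be the main obstacle is the first one: ensuring that $u_n$ itself solves a genuinely uniformly elliptic linear equation on a full neighborhood of $0$. This requires enough regularity to differentiate the Euler--Lagrange equation (hence $u\in C^2$, via elliptic regularity, which in turn presupposes ellipticity at $0$ even though $u_n(0)=0$, handled through the continuity of $F_{pp}$ and the uniform convexity in~\eqref{H1}), and it relies crucially on the $x_n$-independence of $F$, so that the linearized equation is solved by $u_n$ and not merely by incremental quotients of $u$. Once that equation is in hand, the nodal-component-plus-Hopf argument is routine.
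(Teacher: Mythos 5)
Your proof is correct and follows essentially the same route as the paper's: elliptic regularity (De Giorgi--Nash--Moser plus Schauder) upgrades $u$ to $C^{2,\alpha}$, differentiating the Euler--Lagrange equation in the $e_n$-direction shows that $v=u_n$ solves a linear uniformly elliptic equation, and the Hopf boundary point lemma applied at a zero of $v$ admitting an interior tangent ball from a nodal component yields $\nabla u_n(x_0)\ne 0$. The only (immaterial) difference is that you keep the linearized equation in divergence form, whereas the paper writes it in non-divergence form and interprets it in the viscosity sense.
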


\begin{proof}
Since $u$ is a critical function for $\EN$ then it satisfies the 
elliptic equation
\begin{equation*}
G(D^2u,Du,u,x'):=div \, F_p (\nabla u,u,x') - 
F_z(\nabla 
u,u,x')=0.\end{equation*}
From the De Giorgi-Nash-Moser theorem and the Schauder estimates (see~\cite{GT}) it follows that if ~$u$ is locally Lipschitz and $F \in 
C^{2,\alpha}$ then ~$u \in 
C^{2,\alpha}$ and the equation above is 
satisfied there in the classical sense. If $F \in C^3$ then $G \in C^1$ hence 
by differentiating the equation in the $e_n$-direction we see that 
$v=u_n$ satisfies the linearized equation (in the viscosity sense)
$$Lv:=G_{ij} v_{ij} + G_{p_i} v_i + G_z v =0,$$
where the derivatives of $G$ are evaluated at $(D^2u,D u ,u,x')$. 
Since $v$ does not vanish identically we can apply Hopf lemma to $v$ at 
a point $x_0\in \{v=0\}$ which admits a tangent ball from either 
$\{v>0\}$ or $\{v<0\}$. 
\end{proof}

\section{Perturbations at infinity}\label{s3}

For all $R$ large we define the Lipschitz continuous function $\psi_R$ with compact support in $\R$ given by
\begin{equation}\label{12.1}
\psi_R(s) := \begin{cases}1, \quad 0 \leq s \leq \sqrt R, \\ \ \\2-\dfrac{2\log s}{\log R}, \quad \quad \sqrt R < s \leq R, \\ \ \\ 0, \quad s > R.\end{cases}\end{equation}
Notice that 
\begin{equation}\label{12.2}
\psi'_R(s) = \begin{cases}0, \quad s \in (0,\sqrt R) \cup (R,\infty), \\ \ \\  \dfrac{-2}{s\log R}, \quad s\in(\sqrt R,R).\end{cases}\end{equation}
For $0< t  \le \sqrt R/4$, we define a bi-Lipschitz change of 
coordinates:
$$x \mapsto y(x):= x+  t \psi_R (|x|) e_n$$
and let $$u_{R,t}^+(y) = u(x).$$ Notice that $u^+_{R,t}(x)$ coincides 
with $u(x-te_n)$ in $B_{\sqrt R/2}$ and with $u(x)$ outside $B_{R}$. 
Next we estimate $\EN_{R}(u^+_R)$ in terms of $\EN_{R}(u)$. We have
$$D_x y = I+A,$$ with $$A(x) = t \, \psi_R'(|x|)\begin{pmatrix}
  0 & 0 & \cdots & 0 \\
  0 & 0 & \cdots & 0 \\
  \vdots  & \vdots  & \ddots & \vdots  \\
  \frac{x_1}{|x|} & \frac{x_2}{|X|} & \cdots & \frac{x_n}{|x|}
 \end{pmatrix} $$ and $$\|A\| \leq t |\psi'_R(|x|)| \ll 1.$$
Notice that 
$$D_y x = (I+A)^{-1} = I - \frac{1}{1+trA} A.$$
We have,
$$\nabla_y u^+_R = \nabla_x u \;D_yx, \quad dy = (1+tr A) dx,$$
thus
\begin{eqnarray*} &&
\int_{\Omega \cap B_R} F(\nabla_y u^+_{R,t},u^+_{R,t},y') dy 
\\ &&\quad= \int_{\Omega \cap B_R} F \left (\nabla_x u \, \, \left (I- 
\frac{1}{1+trA} A\right ), u, x'\right) (1+tr A) dx.\end{eqnarray*}
We bound the right hand side from above by using that $|(pA)| \le |p 
\cdot e_n| /4$ which together with hypothesis \eqref{H2} for $F$ gives 
that
$$F \left (p \, \, \left (I- \frac{1}{1+trA} A\right ), z, x'\right) (1+tr A) $$ is bounded above by 
\begin{equation}\label{ab}
F(p,z,x') (1+ tr A) - F_p(p,z,t) \cdot (pA) +  C |F_{pp} (p,z,t)| 
|pA|^2.\end{equation}
By writing the same inequality for $u^-_{R,t}$ which is defined as 
$u_{R,t}^+$ with $t$ replaced by $-t$, thus $A$ is replaced by $-A$ in the formulas above, we obtain
\begin{align}\label{AB}
\nonumber
& \EN_{R}(u^+_{R,t})+\EN_{R}(u^-_{R,t})-2 \EN_{R} (u) \\
& \qquad\le C \int_{\Omega \cap B_R} |F_{pp}(\nabla u,u,x')|| \nabla 
u|^2 |A|^2 dx  \\
& \qquad
\le C \frac{t^2}{(\log R)^2} \int_{\Omega \cap \big( B_R \setminus 
B_{\sqrt R} \big)}\frac {|F_{pp}(\nabla u,u,x')|| \nabla u|^2}{|x|^2} 
dx. \nonumber
\end{align}
We denote by \begin{equation}
\label{5.2a}
a(r):= \int_{\Omega \cap B_r} |F_{pp}(\nabla u,u,x')|| 
\nabla u|^2 dx\end{equation}
and by hypothesis \eqref{EE} we know that~$a(r) \le Cr^2$. 
Then the last integral in~\eqref{AB} is controlled, in polar
coordinates, by
\begin{equation}\label{5.2b}
\int_{\sqrt R}^R a'(r) r^{-2}  dr \le a(R)R^{-2} + 2 \int_{\sqrt R}^R a(r) r^{-3} \le C \log R.\end{equation}
{F}rom \eqref{AB} and~\eqref{5.2b} we conclude that
\begin{equation}\label{MAJOR}
\limsup_{R\rightarrow+\infty}\sup_{t\in(0,\sqrt R/4)}
t^{-2}\Big(
\EN_R(u^+_{R,t})+\EN_R(u^-_{R,t})-2\EN_R(u)\Big)\le0.
\end{equation}

\section{Proofs of Theorems \ref{GENERAL} and \ref{GENERAL2}}\label{33s}

\begin{proof}[Proof of Theorem \ref{GENERAL}]
Since $u$ is an $e_n$-minimizer we know that $$\EN_{R}(u^+_{R,t}) \ge 
\EN_{R}(u).$$ This and~\eqref{MAJOR} imply that, for any fixed $t$, we 
have 
\begin{equation}\label{3.0}
\lim_{R \to +\infty} \EN_{R} (u^-_{R,t}) - \EN_{R} (u) =0.\end{equation}
Now we recall the integral formula
\begin{equation}\label{3.1}
\EN_{R} (\max\{u^-_{R,t}, u \}) + \EN_{R} (\min\{u^-_{R,t}, 
u \}) = \EN_{R} (u^-_{R,t}) + \EN_{R} (u) ,\end{equation}
and we make use of the minimality of~$u$, which implies that
\begin{equation}\label{3.2}
\EN_{R}(\min \{u^-_{R,t}, u\}) \ge \EN_{R}(u). \end{equation}
By~\eqref{3.0}, \eqref{3.1} and~\eqref{3.2} we find
\begin{equation}\label{8.1}
\lim_{R \to +\infty} \EN_{R} (v_{R,t}) - \EN_{R} (u) =0,\end{equation}
with
\begin{equation}\label{5.4a}v_{R,t}:=\max\{ u_{R,t}^-,u\}.\end{equation}
Notice that $$v_{R,t}=\max\{ u(x), u(x+te_n)\} \quad \mbox{in $B_{\sqrt R /4}$},$$ and $ v_{R,t} \in  D_R^t(u)$.

Now assume by contradiction that $u\in C^1(\Omega)$ is not monotone on a 
line in 
the $e_n$-direction. Then we can find $t>0$ so that $u(x)$, $u(x+te_n)$ 
satisfy the hypotheses of Lemma~\ref{l1} (say, at some point $x_0 \in 
\Omega$, recall Remark~\ref{F5}).
 
Thus 
we can perturb $v_{R,t}$ locally near $x_0$ into $\tilde v_{R,t}$ such that 
\begin{equation}\label{8.2}
\EN_R (\tilde v_{R,t}) \le \EN_R (v_{R,t}) - 
c\end{equation}
for some fixed $c>0$ depending only on $u$. {F}rom~\eqref{8.1}
and~\eqref{8.2} we 
contradict the 
minimality of $u$ as $R \to +\infty$.
\end{proof}

\begin{proof}[Proof of Theorem \ref{GENERAL2}]
We argue as above and use Lemma~\ref{l2} instead. Given $\epsilon>0$ we 
choose $R$ large such that
$$\EN_R(u^+_{R,t})+\EN_R(u^-_{R,t})-2\EN_R(u) \le \epsilon t^2.$$
Since $u$ is $\{e_n,e_{n+1}\}$-stable we have
$$\EN_R(w) \ge \EN_R(u) - \epsilon t^2 \quad \forall w \in \mathcal D^t_R(u),$$
for all $t$ small enough (the first relation above comes
from~\eqref{MAJOR} and the second one from Definition~\ref{d5}). Then,
using also~\eqref{3.1} and~\eqref{5.4a}, 
we obtain 
$$\EN_{R} (v_{R,t}) - \EN_{R} (u) \le 3 \epsilon t^2.$$
If $u_n$ changes sign in $\Omega$ then from Lemma \ref{L3} we can find a 
point 
$x_0 \in \Omega$ such that $u$ satisfies the hypothesis of 
Lemma~\ref{l2} 
at 
$x_0$. Thus we can perturb $v_{R,t}$ locally near $x_0$ into $\tilde v_{R,t}$ such that 
$$\EN_R (\tilde v_{R,t}) \le \EN_R (v_{R,t}) - ct^2, \quad \quad \tilde v_{R,t} \in \mathcal D_R^{Ct}(u),$$
for some $c, C >0$ depending only on $u$. In conclusion
  $$\EN_R (\tilde v_{R,t}) \le \EN_R (u) + (3 \epsilon - c)t^2, $$
  and we contradict the stability inequality if we choose $\epsilon \ll c$.
\end{proof}
  
\section{Proof of Theorem \ref{t3}}\label{44s}
  
In this section we assume that the domain $\Omega$ and the functional $F$ are invariant 
under translations in the $e_k$,..., $e_n$-directions. 

We define the notion of $u$ to be stable with respect to piecewise Lipschitz deformations in all 
directions generated by $\{e_k,...,e_n\}$ (but not with respect to 
vertical $e_{n+1}$ deformations as in Definition \ref{d4}).
Below to give a precise definition of
$\{e_k,..., e_n\}$-stability, we modify Definitions~\ref{d1} 
and~\ref{d2}
according to the following notation:

\begin{definition}
We say that $v$ is an $\{e_k,..., e_n\}$-Lipschitz deformation of $u$ in 
$B_R$
if there exist Lipschitz functions $\psi^{(k)},...,\psi^{(n)}$ with compact support in 
$B_R$, and
\begin{equation}\label{555}
\sum_{k\le i,j\le n} \|\psi^{(i)}_j\|_{L^\infty(\R^n)}^2 <1
\end{equation} such that
$$v(x)=u(x+\psi^{(k)}(x)e_k+...+\psi^{(n)}(x)e_n).$$
\end{definition}

We remark that, under condition~\eqref{555}, the map
$$x\mapsto 
x+\psi^{(k)}(x)e_k+...+\psi^{(n)}(x)e_n$$ is a diffeomorphism.

\begin{definition}\label{DD}
Let $u\in C^{0,1}(\Omega)$. We say that $v\in C^{0,1}(\Omega)$ is a 
piecewise 
$\{e_k,..., e_n\}$-Lipschitz deformation of $u$ in $B_R$
and write $$v \in D_{R,k}(u)$$
if there exist a finite number $v^{(1)}$,..., $v^{(m)}$ of $\{e_k,..., 
e_n\}$-Lipschitz 
deformations of $u$ in $B_R$ such that $$v(x)=v^{(i)}(x) \quad \quad 
\mbox{for some $i$ (depending on $x$).}$$  
Also, if all $v^{(i)}$ satisfy
$$v^{(i)}(x)=u(x+\psi^{(i,k)}(x)e_k+...+\psi^{(i,n)}(x)e_n) \quad 
\mbox{with} \quad 
\|\psi^{(i,j)}\|_{C^{0,1}(\Omega)} \le \delta$$
for some $\delta>0$, we write
$$v \in D^\delta_{R,k}(u).$$
\end{definition}

\begin{definition}\label{d5.5}
We say that $u$ is $\{e_k,..., e_n\}$-stable for $\EN$ if for 
any $R>0$ and $\epsilon>0$
there exists $\delta>0$ depending on $R$, $\epsilon$ 
and $u$ such that for all $t\in(0,\delta)$ we have that~$\EN_R(u)$ is finite and
$$ \EN_R(v)-\EN_R(u) \ge -\epsilon t^2, \qquad \forall v \in 
D^t_{R,k}(u).$$
\end{definition} 

Notice that Definitions~\ref{d5} and~\ref{d5.5} are quite different, since
vertical perturbations are allowed in Definition~\ref{d5} but not in Definition~\ref{d5.5}.
On the other hand, Definition~\ref{d5.5} allows for horizontal perturbations in $(n-k+1)$-horizontal directions, while only one horizontal direction may be perturbed in
Definition~\ref{d5}.

\begin{remark}\label{97}
{\rm We point out
that if $u \in C^1(\Omega)$  is $\{e_k,..., e_n\}$-stable and $u_k$,...,$u_n$ do not vanish all at some point then $u$ is a critical
point for $\EN$ in a neighborhood of that point
(because any vertical perturbation~$u+\epsilon\psi$ may be
written in this case as a horizontal perturbation
in the span of~$\{e_k,...,e_n\}$, due to the Implicit Function Theorem).
}\end{remark}

\begin{proof}[Proof of Theorem~\ref{t3}]
The proof of Theorem~\ref{t3}
follows as before from 
Lemma \ref{l2}, 
Remark \ref{r2} and Lemma \ref{L3}. 
First we may suppose that $k<n$, otherwise the statement is trivial. 

Let $Y_0$ be a point in $\mathcal U \subset \R^{k-1}$ and we want to show that $\tilde u$ is one-dimensional where
$$\tilde u(x_k,..,x_n):=u(Y_0,x_k,..,x_n).$$
Assume that $0 \in \R^{n-k+1}$ is such that
$\nabla \tilde u(0) $ is nonzero and it points in the $e_k$ direction. Then we may apply Lemma \ref{l2} and Remark \ref{r2} in the $e_{k+1}$,.., $e_n$ directions and conclude that $\tilde u$ is constant in a neighborhood of $0$ in all these directions. Then the set
$$\big\{ (x_{k+1},..,x_n) {\mbox{ s.t. }}
\tilde u(0,x_{k+1},..,x_n)=
\tilde u(0), \quad \nabla \tilde u(0,x_{k+1},..,x_n)=\nabla \tilde u(0) \big\}  $$
is both open and closed, hence the level set $\{ \tilde u=\tilde u(0) \}$ contains the hyperplane $0 \times \R^{n-k}$.
This argument shows that at
all points where $\nabla \tilde u$ is nonzero, the gradient must point in the $e_k$ direction, thus $\tilde u$ depends only on the $x_k$ variable. \end{proof}

\begin{remark}
{\rm
We point out that condition \eqref{H2} on $F$ can be weakened in
Theorems \ref{GENERAL2} and \ref{t3}. Since we only need~\eqref{MAJOR}
as $t \to 0$ we see from Section \ref{s3} that it suffices to have that
$$x\,\mapsto\,
\sup_{|p-\nabla u| \le |\nabla u|/2}  \quad |F_{pp}
(p,u,x')|  \, \,
|\nabla u|^2 $$
is a locally integrable function.}\end{remark}

We conclude this section with a version of Theorem~\ref{GENERAL} for $e_n$-minimiziers with
respect to piecewise Lipschitz perturbations
with norm
bounded by~$\delta$.

\begin{definition}\label{6188}
We say that~$u\in C^{0,1}(\Omega)$ is a $\{\delta,e_n\}$-minimizer 
for~$\EN$ 
if
for any $R>0$ we have
that~$\EN_R(u)$ is finite and
$$\EN_R(u) \le \EN_R(v), \quad \quad \forall v \in D_{R}^\delta(u).$$
\end{definition}

\begin{theorem}\label{GENERAL delta}
Let~$\delta>0$ and~$u \in C^1(\Omega)$ be a $\{\delta,e_n\}$-minimizer for the 
energy $\EN$ 
with $F$
satisfying~\eqref{H1} and~\eqref{H2}.

If \eqref{EE} is satisfied,
then~$u$ is monotone on each 
segment in the $e_n$-direction of length less than~$2\delta$, i.e., for
any~$\bar x\in\Omega$, either~$u_n(\bar x+te_n)\ge0$ or~$u_n(\bar 
x+te_n)\le 0$
for any~$t\in (-\delta,\delta)$.
\end{theorem}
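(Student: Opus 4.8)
The plan is to mimic the proof of Theorem~\ref{GENERAL} almost verbatim, tracking the fact that all the $e_n$-deformations used in the argument have small $C^{0,1}$-norm, so that the competitor functions stay inside $D_R^\delta(u)$ rather than merely $D_R(u)$. First I would fix $\bar x\in\Omega$ and suppose, for contradiction, that $u_n(\bar x+te_n)$ changes sign for some $t\in(-\delta,\delta)$. By Remark~\ref{F5}, after a translation this produces points $0$ and $(\beta_c-\alpha_c)e_n$ with $|\beta_c-\alpha_c|<2\delta$ at which $u$ and its translate $v(x):=u(x+(\beta_c-\alpha_c)e_n)$ satisfy the transversality condition~\eqref{angle}; write $\tau:=\beta_c-\alpha_c$, so $|\tau|<2\delta$.

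Next I would run the perturbation-at-infinity construction of Section~\ref{s3} with the fixed parameter $t=\tau$ (more precisely $t=|\tau|$, after possibly exchanging $u$ and $v$ so that the relevant shift is positive), noting that for $R$ large we have $|\tau|\le\sqrt R/4$, so $u^{\pm}_{R,\tau}$ are well defined and~\eqref{MAJOR} applies. The key point is the estimate in~\eqref{765ty}: since $u^-_{R,\tau}(x)=u(x+\tau\psi_R(|x|)e_n)$ with $\|\tau\psi_R\|_{C^{0,1}}\le|\tau|(1+\|\psi_R'\|_\infty)$, and $\psi_R'\to0$, for $R$ large $u^-_{R,\tau}$ is an $e_n$-Lipschitz deformation of $u$ with $C^{0,1}$-norm of the shift $<\delta$; hence $u^-_{R,\tau}\in D_R^\delta(u)$, and likewise $u^+_{R,\tau}\in D_R^\delta(u)$. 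Applying $\{\delta,e_n\}$-minimality gives $\EN_R(u^+_{R,\tau})\ge\EN_R(u)$, so by~\eqref{MAJOR} we get $\lim_{R\to\infty}\big(\EN_R(u^-_{R,\tau})-\EN_R(u)\big)=0$. The integral identity~\eqref{3.1} together with $\EN_R(\min\{u^-_{R,\tau},u\})\ge\EN_R(u)$ — valid because $\min\{u^-_{R,\tau},u\}\in D_R^\delta(u)$ by the first line of~\eqref{765ty} — yields $\lim_{R\to\infty}\big(\EN_R(v_{R,\tau})-\EN_R(u)\big)=0$ with $v_{R,\tau}:=\max\{u^-_{R,\tau},u\}\in D_R^\delta(u)$, which equals $\max\{u(x),u(x+\tau e_n)\}$ on $B_{\sqrt R/4}$.

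Then I would invoke Lemma~\ref{l1} at the point $x_0$ produced by Remark~\ref{F5}: since $g=\max\{u,v\}$ is not an $e_n$-minimizer in any small ball $B_\eta(x_0)$, the local modification constructed in the proof of Lemma~\ref{l1} decreases the energy by a fixed amount $c>0$, and — this is the point to check — that modification is realized by piecewise $e_n$-Lipschitz deformations whose $C^{0,1}$-norm is as small as we like (in fact $\to0$ as the rescaling parameter $r\to0$), so the perturbed competitor $\tilde v_{R,\tau}$ still lies in $D_R^\delta(u)$. Combining $\EN_R(\tilde v_{R,\tau})\le\EN_R(v_{R,\tau})-c$ with $\EN_R(v_{R,\tau})-\EN_R(u)\to0$ contradicts $\{\delta,e_n\}$-minimality for $R$ large. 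The monotonicity on segments of length $<2\delta$ follows, and Remark~\ref{R1} is not needed here since we only claim line-by-line (here segment-by-segment) monotonicity.

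The main obstacle — and really the only place requiring care beyond bookkeeping — is verifying that every competitor built in the argument genuinely belongs to $D_R^\delta(u)$ and not just to $D_R(u)$: one must confirm that the local perturbation of Lemma~\ref{l1}, which is obtained via the Implicit Function Theorem from a rescaling $g_r$ at scale $r$, corresponds to an $e_n$-Lipschitz deformation of $u$ itself (not of $g_r$) with shift-norm bounded by $\delta$, and that composing it with the infinity-perturbation $v_{R,\tau}$ keeps the total shift-norm $\le\delta$ — here the second line of~\eqref{765ty} (composition of a $D^\delta$-deformation with a $D^\delta$-deformation lands in $D^{3\delta}$) forces one to be slightly more economical, e.g. arranging each individual deformation to have norm $\le\delta/3$, which is harmless since $\|\psi_R'\|_\infty\to0$ and the Lemma~\ref{l1} perturbation has norm $\to0$ with $r$.
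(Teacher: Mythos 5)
Your overall strategy is the one the paper intends (its entire proof of Theorem~\ref{GENERAL delta} is the sentence ``identical to the one of Theorem~\ref{GENERAL}, we just need to choose $|t|<\delta$''), and your instinct that all the content lies in checking membership in $D^\delta_R(u)$ is correct. However, there is a genuine quantitative gap precisely at that point. If $u_n(\bar x+te_n)$ changes sign for $t\in(-\delta,\delta)$, Remark~\ref{F5} produces $\alpha_c,\beta_c\in(-\delta,\delta)$, so the shift $\tau=\beta_c-\alpha_c$ is bounded only by $2\delta$, not by $\delta$. Your assertion that $\|\tau\psi_R\|_{C^{0,1}}\le|\tau|\left(1+\|\psi_R'\|_\infty\right)<\delta$ for $R$ large is false whenever $|\tau|\ge\delta$: since $\|\psi_R\|_\infty=1$, the sup-norm of the shift already equals $|\tau|$, so $u^{\pm}_{R,\tau}\notin D^\delta_R(u)$ and the two appeals to $\{\delta,e_n\}$-minimality ($\EN_R(u^+_{R,\tau})\ge\EN_R(u)$ and $\EN_R(\min\{u^-_{R,\tau},u\})\ge\EN_R(u)$) are not justified. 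As written, your argument only yields monotonicity on segments of length less than $\delta$, not $2\delta$.

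The missing step is that you should not normalize $\alpha_c$ to the origin: that translation is free in Theorem~\ref{GENERAL} but is exactly what doubles the size of the shift here. Instead, keep $\bar x$ fixed and compare the two translates $w_1:=u(\cdot+\alpha_c e_n)$ and $w_2:=u(\cdot+\beta_c e_n)$; each is obtained from $u$ by a constant shift of size strictly less than $\delta$, so after cutting off with $\psi_R$ both lie in $D^\delta_R(u)$ for $R$ large, and so do their $\max$ and $\min$ by the first line of \eqref{765ty}. Running the estimate of Section~\ref{s3} for each of the two shifts (paired with its opposite) and the identity \eqref{3.1} applied to the pair $(w_{1,R},w_{2,R})$ gives $\EN_R(\max\{w_{1,R},w_{2,R}\})\le\EN_R(u)+o(1)$, while $w_1$ and $w_2$ cross transversally at $\bar x$ in the sense of \eqref{angle}, so Lemma~\ref{l1} applies there. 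This two-translate version is what actually produces the factor $2$ in ``$2\delta$''. Your remaining bookkeeping concerns (the crude $3\delta$ in the composition line of \eqref{765ty}, the smallness of the Lemma~\ref{l1} perturbation as $r\to0$) are correctly identified; since the base deformation now has norm strictly below $\delta$, there is room to absorb the local perturbation via a direct additive composition estimate, as you suggest.
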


The proof of Theorem~\ref{GENERAL delta} is identical to the one of Theorem~\ref{GENERAL}, we just need to choose~$|t|<\delta$.

\section{A one-dimensional
example}\label{55s}

In this section, we briefly discuss a one-dimensional example,
to clarify some of the notions of $e_n$-minimality  and $e_n$-stability.
We consider
\begin{equation}\label{EXA}\begin{split}
& F(p,z):=p^2-z^2,\qquad\quad\Omega:=\R \\
{\mbox{and }}& 
u(s) := \begin{cases}\cos(s+\pi/2), \quad {\mbox{ if }} s \leq -\pi/2, 
\\ \ \\ 1, \quad \quad \quad  \quad \quad {\mbox{ if }}
-\pi/2 < s <
\pi/2, \\ \ \\ 
\cos(s-\pi/2), \quad {\mbox{ if }} s\geq\pi/2.\end{cases}
\end{split}\end{equation}

\begin{proposition}\label{P.EXA}
The function~$u$ in~\eqref{EXA}
is~$e_1$-stable in~$(-\pi,\pi)$, (see 
Definition~\ref{d5.5}).
\end{proposition}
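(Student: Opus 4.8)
The plan is to verify Definition~\ref{d5.5} directly for $k=n=1$, $R$ fixed with $B_R=(-R,R)\subseteq(-\pi,\pi)$, that is, to show that for every $\epsilon>0$ there is $\delta>0$ so that $\EN_R(v)-\EN_R(u)\ge-\epsilon t^2$ for all $t\in(0,\delta)$ and all $v\in D^t_{R,1}(u)$. Since $n=1$, a piecewise $e_1$-Lipschitz deformation of $u$ in $B_R$ is a function obtained by gluing together finitely many maps of the form $u(s+\psi^{(i)}(s))$ with $\|\psi^{(i)}\|_{C^{0,1}}\le t$ and compact support in $B_R$. The first step is therefore to understand the energy defect $\EN_R(u(\cdot+\psi))-\EN_R(u)$ for a single $e_1$-Lipschitz deformation. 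I would compute this by the change of variables $y=s+\psi(s)$, exactly as in Section~\ref{s3}: writing $u_\psi(y):=u(s)$ one has $u_\psi'(y)=u'(s)/(1+\psi'(s))$ and $dy=(1+\psi'(s))\,ds$, so
\begin{equation*}
\EN_R(u_\psi)=\int_{B_R}\Big(\frac{u'(s)^2}{1+\psi'(s)}-u(s)^2(1+\psi'(s))\Big)\,ds.
\end{equation*}
Expanding $1/(1+\psi')=1-\psi'+\psi'^2+O(\psi'^3)$ and subtracting $\EN_R(u)$, the first-order term is $-\int_{B_R}(u'^2+u^2)\psi'\,ds$ and the second-order term is $\int_{B_R}u'^2\psi'^2\,ds+O(t^3)$; note the quadratic term is \emph{nonnegative}, which is the good sign we want.

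The key point is then that the first-order (linear) term is itself nonnegative — in fact it can be made to vanish or have the right sign — using the specific structure of $u$ in~\eqref{EXA}. On $(-\pi/2,\pi/2)$ we have $u\equiv1$, $u'\equiv0$, so there $u'^2+u^2=1$; on $(-\pi,-\pi/2)$ and $(\pi/2,\pi)$ we have $u=\cos(s\mp\pi/2)$, hence $u'^2+u^2=\sin^2+\cos^2=1$ identically. Thus $u'^2+u^2\equiv1$ on all of $(-\pi,\pi)$, so the linear term is simply $-\int_{B_R}\psi'\,ds=0$ because $\psi$ has compact support in $B_R$. (This is the one-dimensional manifestation of $u$ being a critical point of the energy; indeed $-u''-u=0$ on each piece and $u\in C^1$, so $u$ solves the Euler--Lagrange equation $-u''=u$, and the Neumann-type boundary terms cancel.) Consequently, for a single $e_1$-Lipschitz deformation,
\begin{equation*}
\EN_R(u_\psi)-\EN_R(u)=\int_{B_R}u'(s)^2\psi'(s)^2\,ds+O(t^3)\ge -Ct^3\ge -\epsilon t^2
\end{equation*}
once $t<\delta:=\epsilon/C$.

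The remaining step is to upgrade this from a single $e_1$-Lipschitz deformation to a \emph{piecewise} one $v\in D^t_{R,1}(u)$, i.e.\ to a function that equals $u_{\psi^{(i)}}$ on a region depending on $s$. Here I would use that on the interval $(-\pi/2,\pi/2)$ the function $u$ is constant equal to $1$, hence \emph{every} $e_1$-deformation $u_{\psi^{(i)}}$ equals $1$ there, so on that interval all the pieces agree and there is no ambiguity; on the two outer intervals $u$ is strictly monotone (increasing on $(-\pi,-\pi/2)$, decreasing on $(\pi/2,\pi)$), and a composition $u(s+\psi(s))$ with small $\psi$ is again monotone there, so the pointwise max/min or any selection among finitely many such pieces is again a function of the form $u(s+\tilde\psi(s))$ for a piecewise-Lipschitz $\tilde\psi$ with $\|\tilde\psi\|_\infty\le t$ — using the elementary closure property~\eqref{765ty} together with monotonicity. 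One then reruns the computation above with this $\tilde\psi$, whose derivative exists a.e.\ and is bounded by $\sim t/(\text{spacing})$; but crucially the linear term still integrates to zero on each maximal interval of constancy of the selection and the quadratic term is still nonnegative, so the same bound $\EN_R(v)-\EN_R(u)\ge -Ct^3$ holds. I expect the main obstacle to be precisely this bookkeeping for piecewise deformations — making sure the change-of-variables / integration-by-parts that kills the linear term survives when $\psi$ is only piecewise Lipschitz and the selection among the $v^{(i)}$ introduces jumps; the resolution is that $v$ itself remains Lipschitz (by Definition~\ref{d2}), the jumps occur only where the competing pieces cross, and at such crossing points the cancellation in the linear term is exactly the statement that $u$ is $C^1$ and solves its Euler--Lagrange equation, which holds on $(-\pi,\pi)$ by the explicit formula~\eqref{EXA}.
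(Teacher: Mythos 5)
Your computation for a \emph{single} $e_1$-Lipschitz deformation is correct (and in fact exact: writing $\tfrac{1}{1+\psi'}=1-\psi'+\tfrac{\psi'^2}{1+\psi'}$ and using $u^2+\dot u^2\equiv 1$ gives $\EN_R(u_\psi)-\EN_R(u)=\int \dot u^2\psi'^2/(1+\psi')\ge 0$). But the entire content of the proposition is the \emph{piecewise} case, and that is where your argument has a genuine gap. The paper's own Remark~\ref{ABS} is a warning here: for $u=|t|$, $F=p^2$, exactly your computation shows minimality against every single $e_n$-Lipschitz deformation ($\int\psi'^2\ge0$), yet a piecewise deformation strictly decreases the energy. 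So "nonnegative quadratic term plus vanishing linear term for each piece" cannot by itself imply the piecewise statement; something specific must be used. Concretely, two steps of your gluing argument fail. First, it is not true that every deformation $v^{(i)}$ equals $1$ on all of $(-\pi/2,\pi/2)$: for $s$ within distance $t$ of $\pm\pi/2$ the shifted point $s+\psi^{(i)}(s)$ may leave the plateau, so the pieces need not agree there. Second, and more seriously, the linear term does \emph{not} integrate to zero piece by piece: on a maximal interval $(a,b)$ where $v=v^{(i)}$ one gets $-\int_a^b(\psi^{(i)})'\,ds=\psi^{(i)}(a)-\psi^{(i)}(b)$, and at an interior gluing point the two competing $\psi$'s need not coincide, because $u$ is not injective (it has a plateau): continuity of $v$ at a crossing only forces $u(a+\psi^{(i)}(a))=u(a+\psi^{(j)}(a))$, not $\psi^{(i)}(a)=\psi^{(j)}(a)$. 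Hence the effective $\tilde\psi$ has genuine jumps near the edge of the plateau, the boundary terms do not telescope, and the change of variables $s\mapsto s+\tilde\psi(s)$ is no longer a bijection. Tellingly, your argument never uses $R<\pi$, whereas the restriction to $(-\pi,\pi)$ is essential (for large $R$ the oscillating tails of $u$ can be "slid and cut" to lower the energy, which is why the paper introduces the modified example~\eqref{EXA2} to get minimality on all of~$\R$).

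The paper avoids the gluing issue entirely by switching to the \emph{vertical} difference $\phi:=v-u$, which is automatically Lipschitz for any $v\in D^t_{R,1}(u)$, and then proving the purely variational statement~\eqref{6.2}: $\EN_R(u+\phi)\ge\EN_R(u)$ for every Lipschitz $\phi$ supported in $(-R,R)$ with $\phi\le0$ on $[-\pi/2,\pi/2]$ and $\phi=0$ on $[\delta-\pi/2,\pi/2-\delta]$. These sign and support properties are what a horizontal deformation of size $t<\delta$ actually guarantees ($v\le\max u=1$ gives $\phi\le0$ on the plateau; $|\psi|\le t$ keeps $s+\psi(s)$ inside the plateau for $s$ in the inner interval, giving $\phi=0$ there). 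The proof of~\eqref{6.2} then combines the criticality of $u$ (which kills the linear term via $\ddot u=-u$ and $\dot u(\pm\pi/2)=0$, the analogue of your $u^2+\dot u^2\equiv1$) with a Poincar\'e inequality $\int\dot\phi^2\ge\lambda_\ell\int\phi^2$ on the outer intervals, whose length $\ell=R-\pi/2+\delta$ is less than $\pi$ so that $\lambda_\ell\ge1$ controls the negative $-z^2$ part of $F$. This is exactly where $R<\pi$ enters. To repair your proof you would need to replace your gluing step by an argument of this type (or otherwise control the jump contributions of $\tilde\psi$ near $\pm\pi/2$); as written, the piecewise step does not go through.
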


\begin{proof} Notice that~$u \in C^{1,1}(\R)\cap
C^\infty(\R\setminus\{-\pi/2,\pi/2\})$.
We prove that
\begin{equation}\label{6.2}
\begin{split}
&{\mbox{
for any $R\in(0,\pi)$, any~$\delta\in(0,\pi/2)$}}\\ &{\mbox{
and any
Lipschitz function~$\phi$ supported
in~$(-R,R)$}}\\
&{\mbox{
with $\phi\le0$ in $[-\pi/2,\pi/2]$ 
and~$\phi=0$ in $[\delta-(\pi/2),(\pi/2)-\delta]$}}\\
&{\mbox{
we have that }}
\EN_R(u+\phi)\ge \EN_R(u).
\end{split}
\end{equation}
To prove it, we may suppose~$R\in(\pi/2,\pi)$, and
we define~$I:=(-R,R)\setminus [-\pi/2,\pi/2]$, $J_-:=(-R,\,\delta-(\pi/2))$,
$J_+:=((\pi/2)-\delta,\,R)$ and~$J:=J_-\cup J_+$. Given~$\ell>0$, we also denote by~$\lambda_\ell=\pi^2/\ell^2$
the first Dirichlet eigenvalue in the interval of length~$\ell$. By taking~$\ell:=R-(\pi/2)+\delta \in (0,\pi)$,
we obtain that
$$ \int_{J_\pm}\dot\phi^2\,ds\ge \lambda_\ell \int_{J_\pm}\phi^2\,ds\ge
\int_{J_\pm}\phi^2\,ds$$
therefore
$$ \int_J \phi^2-\dot\phi^2\,ds\le 0.$$
So, we compute:
\begin{eqnarray*}
&& \EN_R(u)-\EN_R(u+\phi)
\\&&\qquad=2\int_{-R}^R u\phi-\dot u\dot\phi\,ds+\int_{-R}^R \phi^2-\dot\phi^2\,ds
\\ &&\qquad= 2\int_{-\pi/2}^{\pi/2} \phi\,ds+
2\int_{I} u\phi-\dot u\dot\phi\,ds+
\int_J  \phi^2-\dot\phi^2\,ds\\
&&\qquad\le 0 -
2\int_{I} \ddot u\phi+\dot u\dot\phi\,ds+0\\&&\qquad
= -2\int_{I} \frac{d}{ds}(\dot u\phi)\,ds\\
&&\qquad= (\dot u\phi)(-\pi/2)
-(\dot u\phi)(\pi/2) \\&&\qquad=0,
\end{eqnarray*}
which establishes~\eqref{6.2}.

Now let~$v\in D^t_{R,1}$, with~$0<t<\delta$.
Then we define~$\phi(s):=v(s)-u(s)$. Notice that~$\phi$ 
is Lipschitz, with~$\|\phi\|_{C^{0,1}(\R)}\le C\|u\|_{C^{1,1}(\R)}t$, and supported inside~$(-R,R)$. Also, $v\le 1$,
since~$v$ is a deformation of~$u$
and~$u\le1$. Therefore, for any~$s\in 
[-\pi/2,\pi/2]$, we see 
that~$\phi(s)=v(s)-1\le0$.
Finally, since~$v$ is a horizontal deformation of~$u$ of size~$t$,
we have that
$$ \inf_{[\delta-(\pi/2),(\pi/2)-\delta]} v \ge 
\inf_{[\delta-(\pi/2)-t,(\pi/2)-\delta+t]} u \ge
 \inf_{[-(\pi/2),(\pi/2)]} u =1.$$
Consequently, if~$s\in
[\delta-(\pi/2),(\pi/2)-\delta]$ we have that~$v(s)=1$ and~$\phi(s)=0$.
So we can apply~\eqref{6.2} and 
obtain~$\EN_R(v)=\EN_R(u+\phi)\ge\EN_R(u)$.
\end{proof}

As a consequence of Proposition~\ref{P.EXA},
we have that $e_n$-minimizers
are not
necessarily critical for the energy $\EN$ at the points where the gradient vanishes.
We recall that the situation for~$\{e_n, e_{n+1}\}$-stable solutions 
was different, since in that case the criticality of the
energy functional was granted by the vertical perturbations.

The example in~\eqref{EXA} may be modified in order to obtain~$\{\delta,e_1\}$-minimality
in the whole of~$\R$. For instance one may consider:
\begin{equation}\label{EXA2}\begin{split}
& F(p,z):=p^2-\max\{ z,0\}^2,\qquad\quad\Omega:=\R \\
{\mbox{and }}&
u(s) := \begin{cases}
s+\pi, \quad {\mbox{ if }} s<-\pi
\\ \ \\ 
\cos(s+\pi/2), \quad {\mbox{ if }} s \in[-\pi, -\pi/2],
\\ \ \\ 1, \quad \quad \quad  \quad \quad {\mbox{ if }}
-\pi/2 < s <
\pi/2, \\ \ \\
\cos(s-\pi/2), \quad {\mbox{ if }} s\in[\pi/2,\pi]
\\ \ \\ 
\pi-s, \quad {\mbox{ if }} s>\pi  
.\end{cases}
\end{split}\end{equation}
Then the proof of Proposition~\ref{P.EXA} may be easily modified to obtain:

\begin{proposition}\label{P.EXA2}
The function~$u$ in~\eqref{EXA2}          
is a~$\{\delta,e_1\}$-minimizer, for any~$\delta \in (0,\pi/2)$, according to
Definition~\ref{6188}.
\end{proposition}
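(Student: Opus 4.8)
The plan is to modify the proof of Proposition~\ref{P.EXA} so that the energy comparison works on all of $\R$ and for perturbations of size bounded by a fixed $\delta\in(0,\pi/2)$, rather than only inside a fixed interval $(-\pi,\pi)$. The key structural point is that the new nonlinearity $F(p,z)=p^2-\max\{z,0\}^2$ agrees with the old one on the range $\{z\le1\}$ relevant to deformations of $u$ (since any $e_1$-deformation $v$ of $u$ still satisfies $v\le1$), but outside the ``plateau'' the profile $u$ is now affine with $|\dot u|=1$, and on the affine tails $z=u$ is negative, so there the integrand $F(\dot u, u)=\dot u^2=1$ carries no potential contribution. This is exactly what prevents the perturbation from gaining energy far away: on $\{u<0\}$ we are minimizing a purely convex Dirichlet-type energy in $p$, and the affine function is the minimizer with respect to compactly supported perturbations keeping the same boundary data.

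\medskip

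First I would fix $\delta\in(0,\pi/2)$ and let $v\in D^\delta_{R,1}(u)$ for some $R>0$, writing $\phi:=v-u$, which is Lipschitz, compactly supported in $B_R$, and satisfies $\|\phi\|_{C^{0,1}(\R)}\le C\|u\|_{C^{1,1}(\R)}\delta$ as in~\eqref{765ty}. As in the proof of Proposition~\ref{P.EXA}, since $v$ is a horizontal deformation of $u$ and $u\le1$ everywhere, we get $v\le1$, hence $\phi\le0$ on the plateau $[-\pi/2,\pi/2]$; and since $v\ge\inf_{[\delta-(\pi/2)-\delta,\,(\pi/2)-\delta+\delta]}u=\inf_{[-\pi/2,\pi/2]}u=1$ on $[\delta-(\pi/2),(\pi/2)-\delta]$ (here I use that the deformation has size at most $\delta$), we get $\phi=0$ there. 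Moreover $v$ being a deformation of $u$ of size $\le\delta<\pi/2$ forces $\{v\ne0\}\subseteq\{|s|<\pi+\delta\}$... more importantly, on the region where $u$ is affine and negative, $v$ differs from $u$ only by a compactly supported Lipschitz bump.

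\medskip

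The computation then splits the interval $(-R,R)$ into the plateau $P=[-\pi/2,\pi/2]$, the two ``cosine arcs'' $C_\pm$, and the two affine tails $T_\pm$. On $P$, $F(\dot u+\dot\phi,u+\phi)-F(\dot u,u)=\dot\phi^2-2\max\{1+\phi,0\}\,$-term; since $\phi\le0$ and $1+\phi\in[0,1]$, $\max\{1+\phi,0\}^2=(1+\phi)^2$, so this reduces exactly to the plateau computation in Proposition~\ref{P.EXA}, giving $\ge 2\int_P(-\phi)\,ds\ge0$ plus the $\dot\phi^2$ square term. On $C_\pm$ one integrates by parts using the equation $\ddot u+u=0$ (classically, since $u\in C^\infty$ there) exactly as before, and the boundary terms $(\dot u\phi)$ at $\pm\pi/2$ and at $\pm\pi$ must be collected. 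On $T_\pm$, $u<0$ so $F(\dot u+\dot\phi,u+\phi)\ge F(\dot u,u)+2\dot u\dot\phi$ by convexity of $p\mapsto p^2$ and $\max\{z,0\}^2\ge0=\max\{u,0\}^2$ there, i.e. $\ge 1+2\dot u\dot\phi$, and $\int_{T_\pm}2\dot u\dot\phi\,ds$ integrates by parts to the boundary terms at $\pm\pi$ (using $\ddot u=0$, $\dot u=\mp1$ there), which cancel against the $C_\pm$ boundary terms. Assembling, one finds $\EN_R(u+\phi)-\EN_R(u)\ge$ (sum of nonnegative square integrals) $\ge0$.

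\medskip

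The main obstacle I anticipate is bookkeeping of the boundary terms at the four interior junction points $\{\pm\pi/2,\pm\pi\}$ and making sure the affine-tail estimate is written as a genuine convexity inequality (not an identity), so that no term of wrong sign survives; in particular one must be careful that $\phi$ need not be compactly supported strictly inside a single arc, so the integration by parts on each subinterval genuinely produces endpoint contributions that only cancel after summing all pieces, together with the observation that $\max\{z,0\}^2$ is convex and $C^1$ (so no extra singular contribution appears at $\{u+\phi=0\}$). The size restriction $\delta<\pi/2$ is used precisely to guarantee $\phi\equiv0$ on the symmetric subinterval $[\delta-(\pi/2),(\pi/2)-\delta]$ of the plateau, which (as in the remark following Proposition~\ref{P.EXA} about the first Dirichlet eigenvalue) is what keeps the plateau term from going negative; I would double-check that this interval is nonempty, which it is since $\delta<\pi/2$.
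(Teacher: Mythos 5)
Your decomposition into plateau, cosine arcs and affine tails, with junction boundary terms cancelling, is the natural one, and your treatment of the plateau ($v\le 1$ hence $\phi\le 0$ there, and $\phi\equiv0$ on $[\delta-\pi/2,\pi/2-\delta]$) is correct; note the paper itself only asserts that the proof of Proposition~\ref{P.EXA} ``may be easily modified'', so the details are on you. The first genuine gap is the tail estimate. Since $\max\{z,0\}^2$ enters the energy with a \emph{minus} sign, on $T_\pm$ you need an \emph{upper} bound on $\max\{v,0\}^2$, and your justification ``$\max\{z,0\}^2\ge 0=\max\{u,0\}^2$'' points the wrong way. Moreover $v$ genuinely can be positive on $(\pi,\pi+\delta)$ and $(-\pi-\delta,-\pi)$ (a deformation with $\psi<0$ near $s=\pi$ drags the positive hump outward), so on those subintervals the potential term really does lower the energy of $v$ and must be absorbed, e.g.\ via $0\le \max\{v,0\}\le \min\{\phi,\,\sin(|s|-\delta)\}$ there; as written your inequality on $T_\pm$ is false.

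The second, more serious gap is the concluding ``sum of nonnegative square integrals''. Once you assemble your estimates, the quadratic part of the lower bound is $\int_{-R}^R\dot\phi^2-\int_S\phi^2$, where on the right $S\subseteq(\pi/2-\delta,\pi+\delta)$. In Proposition~\ref{P.EXA} the analogous term was handled by the first Dirichlet eigenvalue of an interval of length $R-\pi/2+\delta<\pi$, using crucially that $R<\pi$ and that $\phi$ vanishes at \emph{both} endpoints. Here $R$ is arbitrary: $\phi$ is pinned only at $\pi/2-\delta$ and far away at $R$, the potential-active interval has length $\pi/2+2\delta>\pi/2$, and the relevant mixed Dirichlet--Neumann constant $\pi^2/\big(4(\pi/2+2\delta)^2\big)$ is strictly less than $1$ for every $\delta>0$. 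Hence there are Lipschitz $\phi$ enjoying every property you invoke (support in $(-R,R)$, $\phi\le0$ on the plateau, $\phi=0$ on the inner plateau) for which your assembled lower bound is negative; the strategy cannot close without using more of the deformation structure, for instance the pointwise constraints $u(s+\delta)\le v(s)\le u(s-\delta)$ for $s\ge\pi/2$ (which force $\phi(\pm\pi/2)\le0$, bound $|\phi|$ on the arcs, and control $\max\{v,0\}$ beyond $\pm\pi$). A useful alternative mechanism is the first integral $\dot u^2+\max\{u,0\}^2\equiv 1$: for a \emph{single} deformation $v=u\circ(\mathrm{id}+\psi)$, a change of variables yields the exact identity $\EN_R(v)-\EN_R(u)=\int\big(1-\max\{v,0\}^2\big)(\psi')^2\,ds\ge0$ with no restriction on $\delta$, so the entire difficulty of the proposition sits in the piecewise case, where this identity holds only piece by piece and $\delta<\pi/2$ must enter through the interfaces. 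Your outline does not engage with either mechanism, so the proof is incomplete.
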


This shows that the statement of Theorem~\ref{GENERAL delta} is optimal, since~\eqref{EXA2}
provides an example of~$\{\delta,e_1\}$-minimizer which is
monotone on intervals of length~$2\delta<\pi$
but not on intervals of larger length.

\section{Proofs of some remarks}\label{details}

\begin{proof}[Proof of Remark~\ref{R1}]
We consider a continuous function $u$
which is monotone on each line in $\R^n$.
We show that
\begin{equation}\label{62}
{\mbox{for any~$t\in\R$, the sublevel~$\{u<t\}$ is a half-space}}\end{equation}
(unless it is empty). {F}rom this, it follows that, for different values of~$t$,
$\partial\{u<t\}$ gives a collection of hyperplanes (which are parallel, since
the level sets~$\{u=t\}$ cannot intersect for different values of~$t$), and so~$u$
is one-dimensional.

To prove~\eqref{62},
first we remark that,
from the monotonicity on each line of~$u$, it follows that
\begin{equation}\label{p 62}
{\mbox{both $\{u<t\}$ and $\{u\ge t\}$ are convex sets.}}
\end{equation}
Then, we
take~$p\in \{u<t\}$. Since~$u$ is continuous, there exists~$\varrho>0$
such that
\begin{equation}\label{631}
B_\varrho(p)\subseteq \{u<t\}.
\end{equation}
We enlarge~$\varrho$ till there exists a point
\begin{equation}\label{633}
q\in\{u=t\}\cap \partial B_\varrho(p).
\end{equation}
We denote by~$\Pi_-$
the open halfspace tangent to $B_\varrho(p)$ at~$q$ that contains~$p$, and by~$\Pi_+$ the closed
halfspace tangent to $B_\varrho(p)$ at~$q$ that does not contain~$p$.

By looking at all the lines passing through~$q$,
we deduce from~\eqref{p 62},
\eqref{631}
and~\eqref{633} that
\begin{equation}\label{934}
\Pi_-\subseteq \{u<t\}\end{equation}
and
\begin{equation}\label{935}
\Pi_+\subseteq \{u\ge t\}.\end{equation}
By taking the complementary sets in~\eqref{935} and noticing that~$\Pi_+$ is the complement of~$\Pi_-$,
we conclude that
$$ \Pi_-\supseteq \{u<t\}.$$
This and~\eqref{934} give that~$\Pi_-=\{u<t\}$, proving~\eqref{62}.
\end{proof}

\begin{proof}[Proof of Remark~\ref{stable}]
Suppose that~$u\in C^2(\R^n)$ is a classical stable solution, i.e.
a critical point of the energy functional satisfying~\eqref{stable eq}.
Since $F\in C^2$, we have that for any Lipschitz function~$\phi$ supported in a given ball~$B_R$,
\begin{equation*}\label{922}\begin{split}
&F(\nabla u+t\nabla\phi,u+t\phi,x)-F(\nabla u,u,x)
\\ =\, &t\Big( F_{p_i}\phi_i+F_z\phi\Big)+\frac{t^2}{2}
\Big(  F_{p_i p_j}  \phi_i \phi_j+F_{zz}\phi^2+2F_{p_i z}\phi\phi_i\Big)
+o(t^2),
\end{split}\end{equation*}
where the derivatives of $F$ are evaluated at~$(\nabla u,u,x)$.
Notice that~$o(t^2)$ above
only depends on the Lipschitz norms of~$\phi$ and~$ u$ in~$B_R$, and on the $C^2$-norm of~$F$
in a bounded set (depending on~$R$ as well).
When we integrate the equality above over~$B_R$, the term of order~$t$ disappears since~$u$
is a critical point, therefore we obtain
\begin{equation}\label{92}\begin{split}
&\EN_R(u+t\phi)-\EN_R(u)\\&\qquad=\frac{t^2}2\int_{B_R}
\Big(  F_{p_i p_j} (\zeta) \phi_i \phi_j+F_{zz}(\zeta)\phi^2+2F_{p_i z}(\zeta)\phi\phi_i\Big)\,dx
+o(t^2).\end{split}\end{equation}
Dividing by~$t^2$ and recalling~\eqref{stable eq}, we conclude that
$$ \int_{B_R}
\Big(  F_{p_i p_j} (\zeta) \phi_i \phi_j+F_{zz}(\zeta)\phi^2+2F_{p_i z}(\zeta)\phi\phi_i\Big)\,dx\ge 0.$$
Hence, going back to~\eqref{92}, we obtain that
\begin{equation}\label{1555}
\EN_R(u+t\phi)-\EN_R(u)\ge o(t^2).\end{equation}
Now, given~$w \in\mathcal D^t_R(u)$, we take~$\phi:=(w-u)/t$.
Notice that the Lipschitz norm of~$\phi$ is bounded uniformly in~$t$, therefore~\eqref{1555}
implies~\eqref{55} and so~$u$ is $\{e_n,e_{n+1}\}$-stable.

Viceversa, suppose that~$u$ is $\{e_n,e_{n+1}\}$-stable. Then $u$ is a critical point and~\eqref{55} implies~\eqref{stable eq} by choosing~$w:=u+t\phi$
and taking~$\epsilon$ arbitrarily small. This shows that $u$ is a stable
solution.
\end{proof}

\begin{proof}[Proof of Remark \ref{log}]
We define $e_0(s):=s$ and then recursively $$ e_k(s):=\exp (e_{k-1}(s))=
\underbrace{\exp\circ\dots\circ\exp}_{\text{$k$ times}} s$$
for any $k\in\N$, $k\ge1$. Let $\theta_k(R):=e_{k}(\sqrt{\ell_k(R)})$.
Notice that
\begin{equation}\label{21} \ell_{k+1}(\theta_k(R))=\log\sqrt{\ell_k(R)}=\frac12 \log(\ell_k(R))=
\frac{\ell_{k+1}(R)}{2}.\end{equation}
By induction over $k$, one sees that
\begin{equation}\label{56}
\ell_k'(r)=\big( \pi_{k-1}(r)\big)^{-1},\end{equation}
where the notation in \eqref{PI} was used
together with the setting $\pi_{-1}(r):= 1$ (in this way, $\pi_{k}(r)=\ell_k(r)
\pi_{k-1}(r)$ for any $k\in\N$).
We 
obtain that
\begin{eqnarray*}
\pi_k'(r)&=& \sum_{m=0}^k \;\prod_{
\genfrac{}{}{0pt}{}{0\le j\le k}{j\neq m} }\ell_j(r)\ell_m'(r)
\\ &=&
\sum_{m=0}^k \;\prod_{j=m+1}^k \ell_j(r)\\ &\le&
(k+1) \prod_{j=1}^k \ell_j(r) \\
&=&(k+1) r^{-1}\pi_k(r)\end{eqnarray*}
for large $r$, and so
\begin{equation}\label{7.8}
-\frac{d}{dr} \big( \pi_k(r)\big)^{-2}=
2\big( \pi_k(r)\big)^{-3}\pi_k'(r)
\le 2(k+1)
r^{-1}\big( \pi_k(r)\big)^{-2}
\end{equation}
for large $r$.
Now, recalling~\eqref{21}, we modify \eqref{12.1} as follows:
\begin{equation}\label{12.1a}
\psi_R(s):=\left\{
\begin{matrix}
1, & {\mbox{ if }}0\le s\le \theta_k(R), \\
\, \\
2-\displaystyle\frac{2\ell_{k+1}(s)}{\ell_{k+1}(R)}, & {\mbox{ if }} \theta_k(R)<s\le R,\\ \, \\
0, & {\mbox{ if }}s>R.
\end{matrix}
\right.\end{equation}
{F}rom \eqref{12.1a} and \eqref{56} we see that
\begin{equation}\label{12.2a}
\psi'_R(s)=\left\{
\begin{matrix}
-\displaystyle\frac{2}{\ell_{k+1}(R) \pi_k(s)}, & {\mbox{ if }} \theta_k(R)<s\le R,\\ \, \\
0, & {\mbox{ otherwise}}.
\end{matrix}\right.
\end{equation}
Notice that \eqref{12.1a} and \eqref{12.2a} reduce to \eqref{12.1} and \eqref{12.2}
respectively when $k=0$.
Then, we can argue as in Section \ref{s3}.
In this case, \eqref{AB} gets replaced by
\begin{equation}\label{7.7}\begin{split}
& \EN_R(u^+_{R,t})+\EN_R(u^-_{R,t})-2\EN_R(u)\\ &\qquad
\le C\frac{t^2}{\big(\ell_{k+1}(R)\big)^2}
\int_{\Omega\cap(B_R\setminus B_{\theta_k(R)})}\frac{|F_{pp}(\nabla u,u,x')|\,
|\nabla u|^2}{\big( \pi_{k}(|x|)\big)^2}\,dx\\
&\qquad
= C\frac{t^2}{\big(\ell_{k+1}(R)\big)^2}
\int_{\Omega\cap(B_R\setminus B_{\theta_k(R)})} |F_{pp}(\nabla u,u,x')|\,
|\nabla u|^2\, \sigma(|x|)\,dx,
\end{split}\end{equation}
where
$$ \sigma(r):=
\big( \pi_{k}(r)\big)^{-2}.$$
Therefore, we recall \eqref{5.2a}
and we notice that, in this case, $a(r)\le Cr \pi_k(r)$ for large $r$, thanks
to \eqref{EE weak}. So we use \eqref{7.8} and~\eqref{56},
and, instead of \eqref{5.2b}, in this case we bound the last integral on
right hand side of \eqref{7.7} in polar coordinates by
\begin{eqnarray*}
&& \int_{\theta_n(R)}^R a'(r)\sigma(r)\,dr\le a(R)\sigma(R)-
\int_{\theta_n(R)}^R a(r)\sigma'(r)\,dr
\\ &&\qquad\le CR \big(\pi_k(R)\big)^{-1}+C\int_{\theta_n(R)}^R
\big(\pi_k(r)\big)^{-1} \,dr\\
&&\qquad= CR \big(\pi_k(R)\big)^{-1}+C\int_{\theta_n(R)}^R
\ell_{k+1}'(r)\,dr\\
&&\qquad\le C+C\ell_{k+1}(R).
\end{eqnarray*}
Therefore, \eqref{7.7} gives in this case
\begin{eqnarray*}&&
\limsup_{R\rightarrow+\infty}\sup_{t\in(0,\theta_k(R)/4)}
t^{-2}\Big(                  
\EN_R(u^+_{R,t})+\EN_R(u^-_{R,t})-2\EN_R(u)\Big)\\ &&\qquad\le
\limsup_{R\rightarrow+\infty}\sup_{t\in(0,\theta_k(R)/4)}
\frac{C}{\big( \ell_{k+1}(R)\big)^2} \,\big( 1+\ell_{k+1}(R)\big)
=0,\end{eqnarray*}
which replaces~\eqref{MAJOR}
in this case.
\end{proof}

\begin{proof}[Proof of Remark~\ref{ABS}] Here we
construct a one-dimensional example of a Lipschitz
function $u:\R\rightarrow\R$ that satisfies \eqref{EE}
and that minimizes the energy with respect to any $e_n$-Lipschitz
deformation, without being monotone. For this we take $u(t):=|t|$,
$\Omega:=\R$ and $F:=|p|^2$. Then, \eqref{EE} is obvious,
and clearly $u$ is not monotone. Let us check that it is
minimal with respect to any $e_n$-Lipschitz
deformation, as described in Definition \ref{d1}: for this
let $\psi$ be Lipschitz and supported in $(-R,R)$,
with $|\psi'|<1$, and $v(t)=u(t+\psi(t))=
|t+\psi(t)|$. We have $$ |v'(t)|^2-|u'(t)|^2=(1+\psi'(t))^2-1=
2\psi'(t)+(\psi'(t))^2$$
for almost any $t\in(-R,R)$. Therefore, if we integrate over $(-R,R)$
and we use that $\psi(-R)=0=\psi(R)$, we obtain
$$ \EN_R(v)-\EN_R(u)=\int_{-R}^R (\psi'(t))^2\,dt\ge 0,$$
which is the minimality with respect to $e_n$-Lipschitz
deformations.

It is worth noticing that $u$ is not an $e_n$-minimizer,
since piecewise $e_n$-Lipschitz
deformations may decrease the energy (this justifies the importance
of Definition \ref{d2}). To show this, we take $R:=2$,
\begin{eqnarray*} &&\psi^{(1)}(t):=\left\{
\begin{matrix}
-\displaystyle\frac{2+t}{3}, & {\mbox{ if }} t\in [-2,1],\\
\,\\
t-2 , & {\mbox{ if }} t\in (1,2],
\end{matrix}\right.\\
{\mbox{ and }}&&
\psi^{(2)}(t):=\left\{
\begin{matrix}
t+2 , & {\mbox{ if }} t\in [-2,-1],\\ \, \\
\displaystyle\frac{2-t}{3}, & {\mbox{ if }} t\in (-1,2].
\end{matrix}\right.\end{eqnarray*}
Let also $v^{(i)}(t):=u(t+\psi^{(i)}(t))$ and
$$ v(t):=\left\{
\begin{matrix}
v^{(1)}(t), & {\mbox{ if }} t\in [-2,0],\\
v^{(2)}(t), & {\mbox{ if }} t\in (0,2].
\end{matrix}\right.$$
Then $v$ is a piecewise $e_n$-Lipschitz
deformation of $u$ according to Definition \ref{d2} and one
may explicitly compute that
$$ v(t)=\frac{2(|t|+1)}{3}.$$
In particular, $\EN_R(v)=(8/9)R<2R=\EN_R(u)$, which shows that
$u$ is not an $e_n$-minimizer.
\end{proof}

\begin{proof}[Proof of Remark~\ref{6fsv}]
We define~$a(t)$, $\lambda_i(t)$, $\Lambda_i(t)$ and~$A_{ij}(p)$ as 
in~\cite{FSV} (see, in particular, formulas~(1.4)--(1.6) there).
To avoid confusion with the notation here, the function~$F$ introduced 
below~(1.6) in~\cite{FSV} will be denoted by~$\tilde F$.
The goal is to
apply Theorem~\ref{GENERAL2}
with~$F(p,z,x):=\Lambda_2(|p|)+\tilde F(z)$
(since this and Remark~\ref{R1} here plainly imply
Theorems~1.1 and~1.2 in~\cite{FSV}). For this, we need
to check the convexity of~$F$ in~$p$ and conditions~\eqref{H2}
and~\eqref{EE}.
We may focus on the case~$n=3$,
i.e. on the case of Theorem~1.2 of~\cite{FSV} (this allows us
to take also assumptions~(B1) and~(B2) in~\cite{FSV}). {F}rom~(1.6) 
and~(1.5) in~\cite{FSV}, we see that
$$ F_{p_i}(p,z,x)=\lambda_2(|p|)\,p_i=a(|p|)\,p_i$$
and so
$$ F_{p_ip_j}(p,z,x)=a(|p|)\,\delta_{ij} +a'(|p|)\,|p|^{-1}p_i 
p_j=A_{ij}(p).$$
Therefore, Lemma~2.1 in~\cite{FSV} gives the desired convexity of~$F$
and it implies that
\begin{equation}\label{FF1}
{\mbox{$
|F_{pp}(p,z,x)|$ is bounded from above and below by $
C\big(\lambda_1(|p|)+\lambda_2(|p|)\big)$}}.\end{equation}
On the other hand, by Lemma~4.2 of~\cite{FSV}, we have that, if~$|p|\le 
M$, then
\begin{equation}\label{FF2}
\lambda_1(|p|)\le C_M \lambda_2(|p|)\end{equation}
and
\begin{equation}\label{FF3}
\lambda_2(|p+q|)\le C_M \lambda_2(|p|)\end{equation}
if~$|q|\le |p|/2$,
for
suitable~$C_M>0$ (possibly varying line after line). By 
plugging~\eqref{FF2} into~\eqref{FF1} 
we obtain that, if~$|p|\le M$,
\begin{equation}\label{FF5}
|F_{pp}(p,z,x)|\le
C_M \lambda_2(|p|).\end{equation}
Using~\eqref{FF5} and~\eqref{FF3}
we see that if~$2|q|\le |p|\le M$,
$$ |F_{pp}(p+q,z,x)|\le C_M  \lambda_2(|p+q|)\le C_M \lambda_2(|p|)
\le C_M|F_{pp}(p,z,x)|,$$
which gives~\eqref{H2} (notice that we may suppose~$|p|=|\nabla u|\le M$
in this case).

Moreover, using~\eqref{FF5} here and~(4.3) of~\cite{FSV}, we obtain
$$ |F_{pp}(p,z,x)|\,|p|^2\le C_M \lambda_2(|p|)\,|p|^2=
C_M a(|p|)\,|p|^2\le \Lambda_2(|p|).$$
This and~(5.16) in~\cite{FSV} imply
$$ \int_{B_R} |F_{pp}(\nabla u,u,x)|\,dx\le C_M 
\int_{B_R}\Lambda_2(|\nabla u|)\,dx\le C_M R^2.$$
This shows that~\eqref{EE} holds true in this case: so we may use
Theorem~\ref{GENERAL2}, then recall
Remark~\ref{R1}, and obtain the one-dimensional results of Theorems~1.1 
and~1.2 of~\cite{FSV}.
\end{proof}

\end{document}